\newtheorem{theo}{Theorem}[section]
\newtheorem{prop}[theo]{Proposition}
\newtheorem{example}[theo]{Example}
\newtheorem{definition}[theo]{Definition}
\newtheorem{lemma}[theo]{Lemma}
\newtheorem{theorem}[theo]{Theorem}
\newtheorem{proposition}[theo]{Proposition}
\newtheorem{remark}[theo]{Remark}
\renewcommand{\=}{:=}
\newcommand{\rank}{\operatorname{rank}}
\newcommand{\beq}{\begin{equation}}
\newcommand{\eeq}{\end{equation}}
\newcommand{\s}{\sigma}
\newcommand{\C}{\mathbb{C}}
\newcommand{\R}{\mathbb{R}}
\newcommand{\Z}{\mathbb{Z}}
\renewcommand{\P}{\mathbb{P}}
\renewcommand{\H}{\mathbb{H}}
\newcommand{\HH}{\mathbb{H}}
\newcommand{\HP}{\mathbb{HP}}
\newcommand{\rr}{{\mathbb{R}}}
\newcommand{{\ee}}{{\`e}}
\newcommand{{\aA }}{{\`a}}
\newcommand{{\oo}}{{\`o}}
\newcommand{{\uu}}{{\`u}}
\newcommand{{\ii}}{{\`i}}
\newcommand{{\BL}}{{Bl_0(\HH^n)}}
\newcommand{{\BLHP}}{{Bl_{[1,0,0]}(\HP^2)}}
\newcommand\SO{\mathrm{SO}}
\newcommand\T{\mathrm{T}}
\newcommand\Sp{\mathrm{Sp}}
\renewcommand{\square}{\kern1pt\vbox
{\hrule height 0.6pt\hbox{\vrule width 0.6pt\hskip 3pt
\vbox{\vskip 6pt}\hskip 3pt\vrule width 0.6pt}\hrule height0.6pt}\kern1pt}
\renewcommand\Im{\operatorname{Im}}
\renewcommand{\Im}{{\rm Im}}
\def\<#1,#2>{\langle\,#1,\,#2\,\rangle}
\newcommand{\arr}{\begin{array}{rlll}}
\newcommand{\ea}{\end{array}}
\newcommand{\bea}{\begin{eqnarray}}
\newcommand{\eea}{\end{eqnarray}}
\newcommand{\bean}{\begin{eqnarray*}}
\newcommand{\eean}{\end{eqnarray*}}
\def\sideremark#1{\ifvmode\leavevmode\fi\vadjust{
\vbox to0pt{\hbox to 0pt{\hskip\hsize\hskip1em
\vbox{\hsize3cm\tiny\raggedright\pretolerance10000
\noindent #1\hfill}\hss}\vbox to8pt{\vfil}\vss}}}
\newcounter{ssig}
\newcounter{ttig}
\begin{document}
\author{Graziano Gentili}
\address{Dipartimento di Matematica e Informatica ``U. Dini'', Universit\`a di Firenze, 50134 Firenze, Italy}

\email{gentili@math.unifi.it}

\author{Anna Gori}
\address{Dipartimento di Matematica, Universit\`a di Milano, Via Saldini 50, 20133 Milano, Italy}

\email{anna.gori@unimi.it}

\author{Giulia Sarfatti}
\address{Dipartimento di Matematica e Informatica ``U. Dini'', Universit\`a di Firenze, 50134 Firenze, Italy}
\email{sarfatti@math.unifi.it}
\thanks{\rm This project has been supported by G.N.S.A.G.A. of INdAM - Rome (Italy), by MIUR of the Italian Government (Research Projects: PRIN ``Real and complex manifolds: geometry, topology and harmonic analysis'' and FIRB ``Geometric function theory and differential geometry"). The third author is partially supported by SIR ``Analytic aspects in complex and hypercomplex geometry" of the Italian MIUR and part of this project has been developed while she was an INdAM COFUND fellow at IMJ-PRG  Paris.}

\

\title{Quaternionic toric manifolds}

\title{Quaternionic toric manifolds}
\begin{abstract}  In the present paper we  introduce and study  a  new notion of toric manifold in the quaternionic setting. We develop  a  construction with which, starting from appropriate  $m$-dimensional Delzant polytopes, we obtain manifolds of real dimension $4m$, acted on  by $m$ copies of the group  $\Sp(1)$ of unit quaternions. These manifolds, are {\em quaternionic regular} in the sense of \cite{GGS} and can be endowed  with a $4$-plectic structure and  a generalized moment map.  Convexity properties of the image of the moment map are studied.\\
Quaternionic toric manifolds appear to be a large enough class of examples where one can test and study  new results in quaternionic geometry.

\end{abstract}
\maketitle
\section{introduction}

 \noindent Toric varieties are geometric objects that can be defined by combinatorial information encoded in convex polyhedra. They  provide a large and interesting class of examples in algebraic geometry and many notions in this field  such as singularities, birational maps, cycles, homology, intersection theory can be interpreted in terms of properties of the convex polyedra on which these varieties are modeled.
An exahustive introduction to this topic can be found in the book \cite{cox} by Cox, Little and Schenck.

The study of toric manifolds (i.e., smooth toric varieties) has many different  motivations and a wide spectrum of applications. In particular, in symplectic geometry, toric manifolds provide examples of extremely symmetric and completely integrable Hamiltonian spaces.  Properties of symplectic toric manifolds are extensively studied, and are in the mainstream of current mathematical research.

The term moment map was introduced by Souriau, \cite{So}, under the French name {\em application moment}, to indicate one of the  main tools used  to study problems in geometry and topology when there is a suitable symmetry, as illustrated in the book by Gelfand, Kapranov and Zelevinsky \cite{Ge}.
The role of the moment map is fundamental in the symplectic setting: in fact the geometry encoded in its image, the so-called moment polytope, identifies the symplectic toric manifold.

Recent developments in the theory of regular functions over the quaternions encourage to go through the already existing approaches and to elaborate new tools to study quaternionic toric manifolds. This class of manifolds seems suitable to become large enough to give interesting examples, where to test and develop new results of quaternionic differential (and 4-plectic) geometry.\\
\\
\noindent The purpose of the present paper is to introduce the notion of quaternionic toric manifolds. 
The  starting point is the definition of $4$-plectic manifold, originally introduced by Foth in \cite{F}, as a natural generalization of symplectic manifold. In the symplectic case, whenever a compact Lie group acts on the manifold in a Hamiltonian fashion it is possible to define a moment map which takes values in the dual of the Lie algebra of the acting group; in particular when the group is a torus  $T^m$ and the action is effective the moment map takes values in $\R^m$. These manifolds are called  symplectic toric manifolds.
We consider $4$-plectic manifolds, i.e. $4m$-dimensional real manifolds endowed with a non degenerate closed $4$-form, acted on by the group $\Sp(1)^m$ in a {\em generalized Hamiltonian} fashion  so that it is possible to define
 a {\em tri-moment map} which takes values in $(\Lambda^3{\mathfrak{sp}(1)^*}^m)\cong \R^{m}$. 
Inspired by the symplectic setting we give the following
\begin{definition}  Let $M$ be a connected, compact $4m$-dimensional  $4$-plectic manifold on which  $\Sp(1)^m$ acts  effectively in a generalized Hamiltonian fashion with discrete principal isotropy. Then $M$ is called a {\em quaternionic toric manifold.}
 \end{definition}
The celebrated  Atiyah's convexity Theorem, \cite{At}, establishes the convexity of the image of the moment map for symplectic toric manifolds.\\
In some cases we are able to prove that the image of the tri-moment map is a convex polytope. More generally,  
when  a $4$-plectic manifold $(M, \psi)$,  acted on in a generalized Hamiltonian fashion by $Sp(1)^m$ with tri-moment map $\s,$ is equipped with 
a  {\em partitioned strongly non degenerate}  form $\psi$,
  we prove  that $\s(M)$  is contained in the convex envelope of a finite set of points, see Theorem \ref{convexity}.\\
In the other direction, in the symplectic setting the Theorem of Delzant proves that there is a one-to-one correspondence between symplectic toric manifolds and a special class of polytopes, the Delzant polytopes; in particular  in its well known paper \cite{Del} the author provides  a procedure to recover the  symplectic manifold starting from a Delzant polytope.
In the quaternionic setting the idea of defining toric manifolds starting from polytopes can already  be found in  \cite{S}  where the author begins the  study of  a new class of topological spaces analogous to real and complex toric varieties, but with the underlying structure provided by the skew field of quaternions. 
Starting from a $m$-dimensional convex polytope $P$  and a {\em characteristic function}  he defines a {\em quaternionic toric variety} to be a certain topological quotient of $P\times (S^3)^m$, where $S^3$ is the unit sphere of the quaternionic space $\H$. The author emphasizes that these are not algebraic varieties, and moreover he observes that  the notion of  ``quaternionic variety" is unclear, because quaternions are non-commutative and general polynomials are not well behaved. The author studies the topology  and the homology Betti numbers of the resulting objects. \\ In the present paper we introduce a procedure that, starting from a $m$-dimensional  Delzant polytope with appropriate hypotheses, leads to obtain a compact manifold acted on  effectively and with trivial principal isotropy by  $Sp(1)^m$. 
The advantage of our construction 
is that it  suggests a way, in the spirit of the symplectic cut \cite{Ler}, to equip the resulting manifold with a non-degenerate $4$-form. \\
Indeed we define the $4$-plectic cut as follows. Let $(M,\psi)$ be a $4$-plectic manifold equipped with a generalized Hamiltonian $Sp(1)^m$-action. Consider the restricted $Sp(1)$-action, and let $h:M\rightarrow \R$ be the corresponding  tri-moment  map. Let $\varepsilon$ be a regular value of $h$. 
For simplicity we assume that the $Sp(1)$-action on $h^{-1}(\varepsilon)$ is free. We denote by $M_{h>\varepsilon}, M_{h\geq \varepsilon}$ the pre-images of $(\varepsilon,\infty)$ and $ [\varepsilon,\infty)$ under $h : M\rightarrow \R$, and denote by $\overline{M_{h\geq\varepsilon}}$ {\em the $4$-plectic cut }, i.e. the manifold which is obtained by collapsing the boundary ${h^{-1}}(\varepsilon)$  of $M_{h\geq \varepsilon}$ along the orbits of the $Sp(1)$-action. \\
Let $\psi_0$ be the standard $4$-plectic form on $\H$. With the above notations we prove
\begin{theorem} \label{TEO7.2}  Let $(M,\psi)$ be a $4$-plectic manifold. Whenever the induced form $\psi\oplus \psi_0$ on $M\times \H$ is {\em horizontal} along ${(h-\frac{1}{4}|q|^4)}^{-1}(\varepsilon),$ there is a natural $4$-plectic structure $\Psi_{\varepsilon}$ on $\overline{M_{h\geq\varepsilon}}$ such
that the restriction of $\Psi_{\varepsilon}$ to $M_{h>\varepsilon} \subseteq \overline{M_{h\geq \varepsilon}}$ equals $\psi$ .
\end{theorem}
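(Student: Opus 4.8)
The plan is to realize $\overline{M_{h\geq\varepsilon}}$ as a quotient of a level set inside the product $M\times\H$, following Lerman's symplectic cut. I would set $N\=M\times\H$ with the product form $\Omega\=\psi\oplus\psi_0$, and let $\Sp(1)$ act diagonally: by the given action on $M$ and by the standard linear action on $\H$. A direct computation with $\psi_0$ and the fundamental vector fields of the $\Sp(1)$-action on $\H$ identifies the tri-moment map of the second factor with $-\frac14|q|^4$, so that the diagonal action on $N$ is generalized Hamiltonian with tri-moment map $\Phi\=h-\frac14|q|^4$. Writing $Z\=\Phi^{-1}(\varepsilon)$, I would first note that $\varepsilon$ is a regular value of $\Phi$: where $q=0$ one has $d\Phi=dh\neq0$ since $\varepsilon$ is regular for $h$, and where $q\neq0$ the $\H$-component of $d\Phi$ is already nonzero. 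Because $\Sp(1)$ acts freely on $\H\setminus\{0\}$—its action on each sphere $\{|q|=r\}$, $r>0$, being free and transitive—and freely on $h^{-1}(\varepsilon)$ by assumption, the diagonal action on $Z$ is free; since $\Sp(1)$ is compact this action is proper, so $Z/\Sp(1)$ is a smooth $4m$-manifold.

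The next step is to identify $Z/\Sp(1)$ with the cut. Consider $S\=\{(p,q)\in Z:\ q\in\R_{>0}\}$. On the open locus $q\neq0$ each $\Sp(1)$-orbit meets $S$ exactly once (the unique $u\in\Sp(1)$ with $uq\in\R_{>0}$ being $u=\bar q/|q|$), so $S$ is a global slice there; moreover the constraint $\Phi=\varepsilon$ reads $h(p)=\varepsilon+\frac14|q|^4$, which exhibits $S$ as a graph over $M_{h>\varepsilon}$. Hence the open part $\{q\neq0\}/\Sp(1)$ of the quotient is diffeomorphic to $M_{h>\varepsilon}$. On the closed locus $q=0$ the constraint forces $p\in h^{-1}(\varepsilon)$, and the corresponding part of the quotient is $h^{-1}(\varepsilon)/\Sp(1)$, exactly the collapse of the boundary of $M_{h\geq\varepsilon}$ along the $\Sp(1)$-orbits. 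Gluing the two pieces yields precisely $\overline{M_{h\geq\varepsilon}}$.

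I would then define $\Psi_\varepsilon$ by descent. Let $\iota\colon Z\hookrightarrow N$ and $\pi\colon Z\to Z/\Sp(1)$ denote the inclusion and the projection. The invariance of $\Omega$ makes $\iota^*\Omega$ invariant, while the hypothesis that $\Omega$ is horizontal along $Z$ says exactly that $\iota^*\Omega$ is annihilated by contraction with every fundamental vector field of the $\Sp(1)$-action; hence $\iota^*\Omega$ is basic and descends to a unique $4$-form $\Psi_\varepsilon$ on $Z/\Sp(1)$ with $\pi^*\Psi_\varepsilon=\iota^*\Omega$. Closedness is then automatic, since $\pi^*d\Psi_\varepsilon=d\iota^*\Omega=\iota^*d\Omega=0$ and $\pi^*$ is injective. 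To see that $\Psi_\varepsilon$ restricts to $\psi$ on $M_{h>\varepsilon}$, I would pull $\iota^*\Omega$ back along the slice $S$: on $S$ the three imaginary coordinates of $q$ vanish identically, so $\psi_0$ pulls back to zero and $\iota^*\Omega$ restricts to $\psi$; as $S$ maps diffeomorphically onto the open part of the quotient, $\Psi_\varepsilon=\psi$ there.

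The remaining and, I expect, hardest step is the non-degeneracy of $\Psi_\varepsilon$ along the cut divisor $h^{-1}(\varepsilon)/\Sp(1)$, where the identification with $\psi$ is no longer available and where non-degeneracy, being an open condition, does not follow from its validity on the dense complement. Here I would construct a local model for $Z/\Sp(1)$ transverse to the divisor, whose codimension is $4$ with normal directions governed by the collapsing $\H$-factor, and show that the $\psi_0$-contribution supplies non-degeneracy in these normal directions while the horizontality hypothesis forces the mixed terms to assemble into a non-degenerate $4$-form. This transverse analysis near the collapsed orbits is the crux of the argument; away from the divisor non-degeneracy is inherited from $\psi$ by the previous step.
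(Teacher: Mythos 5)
Your construction coincides with the paper's: both realize the cut as the $4$-plectic reduction of $(M\times\H,\ \psi\oplus\psi_0)$ under the diagonal $\Sp(1)$-action with tri-moment map $F(m,q)=h(m)-\frac14|q|^4$, and both identify $F^{-1}(\varepsilon)/\Sp(1)$ with $\overline{M_{h\geq\varepsilon}}$ (the paper uses the fundamental set $M_{h>\varepsilon}\times\{1\}$ where you use the slice $q\in\R_{>0}$; these are the same device). Your verifications of regularity of $\varepsilon$, freeness of the diagonal action, the descent of $\iota^*\Omega$ to a closed form $\Psi_\varepsilon$, and the identification $\Psi_\varepsilon|_{M_{h>\varepsilon}}=\psi$ are all correct, and in fact more detailed than what the paper writes down.

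There is, however, a genuine gap: the non-degeneracy of $\Psi_\varepsilon$ along the collapsed divisor $h^{-1}(\varepsilon)/\Sp(1)$ is never proven. You rightly note that it cannot be inherited from the dense open part, but you then only outline a plan (``I would construct a local model \dots\ and show that \dots'') without carrying it out; absent that step, $\Psi_\varepsilon$ is only a closed $4$-form, not a $4$-plectic structure, so the theorem is not established. The point is not routine: for $4$-forms, unlike in symplectic reduction, there is no general argument identifying the kernel of $\iota^*\Omega$ restricted to the level set with the orbit directions, which is exactly why the horizontality hypothesis is imposed. The paper closes this gap not by a transverse local model but by invoking Foth's reduction theorem (Theorem 3.1 of \cite{F}, restated in the paper as the $4$-plectic reduction theorem): its hypotheses --- $\varepsilon$ a regular value, stabilizers of points of the level set forming a smooth (here trivial, hence spheroid) bundle, and horizontality of $\psi\oplus\psi_0$ --- are precisely what you have already verified, and its conclusion is that the reduced space carries a $4$-plectic, in particular non-degenerate, form $\psi_{red}$ with $\pi^*\psi_{red}=i^*(\psi\oplus\psi_0)$. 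The quickest repair of your argument is therefore to replace your final paragraph by an appeal to that theorem; a self-contained proof would require actually completing the transverse analysis you sketch, which is essentially reproving Foth's result.
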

\noindent As an application we
find a correspondence between a special class of Delzant polytopes and some quaternionic toric manifolds. In these cases we are also able to show that the involved  manifolds admit an action of $(\H^*)^m$ with an open dense orbit, in analogy with what happens in the complex setting.
%
We observe that all these examples are {\em quaternionic regular} manifolds in the sense of \cite{GGS}.

The paper is organized as follows. In the second section we give the basic definitions and notions of $4$-plectic manifolds and generalized Hamiltonian actions.
In Section 3  we present a sub-convexity result, Theorem \ref{convexity}, and in Section 4 we describe the above mentioned procedure,  providing necessity and sufficiency conditions under which it can be applied. 
Section 5 is devoted to study the $4$-plectic reduction and the $4$-plectic cut. In the last section we collect the obtained examples, we give the explicit form of the tri-moment map and consequently we obtain a convexity theorem for this class of examples. We finally make some remarks on the $\H^*$ action and on the manifold $G_2/SO(4)$ which deserves further investigation.


\section{The $4$-plectic viewpoint}
\noindent We begin  this section by introducing  a possible counterpart of symplectic forms and  structures on $4m$-dimensional real manifolds.
\begin{definition} Let $M$ be  real manifold of dimension $4m.$ A  $4$-form $\psi$ on $M$ is said  to be {\em $4$-plectic}  if 
\begin{enumerate}
\item $\psi$ is closed, i.e. $d\psi=0$;
\item  $\psi$ is {\em non-degenerate},  i.e. the map $v\mapsto \iota_v\psi$ that contracts $\psi$ along a tangent vector field $v$ has trivial kernel.
\end{enumerate}
A $4$-plectic form  defines a {\em $4$-plectic structure} on $M$, and $M$  equipped with such a form is  called a {\em $4$-plectic manifold}.
\end{definition} 
A natural class of examples of $4$-plectic manifolds is given by $4m$-dimensional symplectic manifolds. Indeed
starting from a manifold with symplectic form $\omega$ we obtain a $4$-plectic manifold  by endowing it with the $4$-form $\omega\wedge \omega$. 
Quaternion K\"ahler manifolds equipped with the Kraines  form, \cite{K}, give a class of examples of non-symplectic $4$-plectic manifolds; a particularly large class of $4$-dimensional $4$-plectic manifolds is given by the Kulkarni four-folds, \cite{Ku}.\\ An interesting basic example that we will use in the sequel is the quaternionic space $\H^m,$ naturally identified with $\mathbb{R}^{4m},$ endowed with  the  $4$-plectic form $\psi_0$ defined by $$\psi_0=\sum_{i=1}^m dx_{4i-3}\wedge dx_{4i-2}\wedge dx_{4i-1}\wedge dx_{4i}$$ where $x_1,\ldots,x_{4m}$  are the coordinates  on $\R^{4m}$. Note that  $(\mathbb{H}^m,\psi_0)$ is not symplectic for $m>1$; indeed the form $\psi_0$ cannot be obtained as the square of a symplectic form. \\
The notion of Hamiltonian action in the symplectic setting is very useful and powerful. Indeed whenever a Lie group $G$ acts on a symplectic manifold in a Hamiltonian fashion it is possible to define a map $\mu:M\rightarrow \mathfrak{g}^*$, commonly known as moment map, which encodes many geometric information on the manifold and on the action. Whenever the action of the group $G=Sp(1)^k$ on  a $4$-plectic  $4m$-dimensional manifold $M$ is {\em generalized Hamiltonian}, our aim is  to define an analog  of the moment map also in the $4$-plectic setting, following the path indicated by Foth in \cite{F}. \\ 
Whenever a Lie group $G$ acts on a manifold $M$, it is possible to define a canonical map $\mathfrak{g}\rightarrow \Gamma({M, T M})$ which sends the vector $X\in \mathfrak{g}$ to the {\em fundamental vector field} $\widehat X$ in $M$, such that  at a point $p\in M,$ $$\widehat X_p=\frac{d}{dt}_{|_{t=0}}\exp tX\cdot p$$  Now, if $M$ is equipped with a $4$-plectic form $\psi, $  there is also a natural map $\mathfrak{g}\rightarrow A^3(M)$ which sends the generic vector $X\in \mathfrak{g}$ to the contraction of $\psi$ along $\widehat X$, i.e. to a $3$-form on M. Given  a tangent vector field  $Y\in \Gamma(M,TM)$, if the $3$-form $\iota_Y \psi$ is closed we say that $Y$ is a {\em locally Hamiltonian vector field}; if moreover $\iota_Y \psi$ is exact we say that $Y$ is a {\em Hamiltonian vector field}. \\
From now on we assume $M$ is  a $4m$-dimensional real manifold.

\begin{definition}  Let $(M,\psi)$ be a $4$-plectic manifold  on  which the group $Sp(1)^k$  acts preserving $\psi$. We say that  this action is {\em generalized Hamiltonian} if  for any $X\in\mathfrak{sp}(1)^k$  the fundamental vector field $\widehat{X}$ is Hamiltonian.\end{definition}
\noindent The standard basis of $\mathfrak{sp}(1)\cong\mathfrak{su}(2)$ is given by $$H=\left(
\begin{array}{cc}
i&0  \\
0&-i\\
\end{array}
\right), \;\;\;X=\left(
\begin{array}{cc}
0&1  \\
-1&0\\
\end{array}
\right),\;\;\;Y=\left(
\begin{array}{cc}
0&i  \\
i&0\\
\end{array}
\right),$$
which represent respectively the quaternion imaginary units $i,j,k$.\\
The space of $3$-vectors $\Lambda^3(\mathfrak{sp}(1))$ can be  identified with $\R$ by the isomorphism that sends $ X\wedge Y\wedge H\mapsto 1$.\\
Any $$\delta=(\delta_1,\delta_2,\ldots,\delta_k)=(U_1\wedge V_1\wedge W_1,\ldots,U_k\wedge V_k\wedge W_k)\in (\Lambda^3\mathfrak{sp}(1))^k$$ induces a $k$-tuple of $3$-vector fields  on $M$ $$\widetilde{\delta}=(\widetilde\delta_1,\widetilde\delta_2,\ldots,\widetilde\delta_k)=(\widehat U_1\wedge \widehat V_1\wedge \widehat W_1,\ldots, \widehat U_k\wedge \widehat V_k\wedge \widehat W_k)\in (\Lambda^3(TM))^k.$$
\begin{definition} \label{trimoment}Let  $Sp(1)^k$ act on a $4$-plectic manifold  $(M,\psi)$ in a generalized Hamiltonian fashion.  A tri-moment map $\s$ is a map
$$\s:M\rightarrow {((\Lambda^3{\mathfrak{sp}(1)})^k)}^*\cong{((\Lambda^3{\mathfrak{sp}(1)})^*)}^k\cong \R^{k}$$ satisfying the following conditions:
\begin{enumerate}
\item $\s$ is $Sp(1)^k$-invariant, i.e. $\s(g\cdot p)=\s(p)$;
\item for any $\delta=(\delta_1,\delta_2,\ldots,\delta_k)$ in $(\Lambda^3{\mathfrak{sp}(1)})^k$, $p\in M$ and $v\in T_pM$ we have
$$d \s_p(v) (\delta)=\sum_{i=1}^k  \iota_{\widetilde{\delta_i}_{p}} \psi(v)=:\iota_{\widetilde \delta_p}\psi (v)\;$$
where $\widetilde{\delta}_i$ is the tri-vector field induced by $\delta_i$.
\end{enumerate}
\end{definition}
\noindent  Since the coadjoint action of $Sp(1)$ on $\mathfrak{sp}(1)^*$ induces the trivial action on $(\Lambda^3\mathfrak{sp}(1))^*$, the action is indeed equivariant.\\
A further property of the tri-moment map is that  for any $p\in M$ such that $\s( p)$ is regular, and $V=(\Lambda^3(T_p(Sp(1)^k\cdot p)))^k$   $$\ker \ d\s_p={V}^{\perp_{\psi_p}};$$
In fact $${\ker} \ d\s_p=\{v\in T_p M\ | 0=d\s_p(v)(\delta)=\iota_{\widetilde \delta_p}\psi (v)\ \text{for any} \ \delta \in   (\Lambda^3\mathfrak{sp}(1))^k\}$$ equals 
$${V}^{\perp_{\psi_p}}=\{v\in T_p M \ | \ \iota_{\widetilde \delta_p}\psi (v)=0 \ \text{for any} \ \delta \in   (\Lambda^3 (T_p(Sp(1)^k\cdot p))^k\},$$
since  $(\Lambda^3\mathfrak{sp}(1))^k$ and $(\Lambda^3 (T_p(Sp(1)^k\cdot p))^k$ are isomorphic.

\noindent Notice that the tri-moment map $\s$ is defined up to a constant $C\in \R^k$; the $Sp(1)^k$-invariance of $\s$ implies that there is no further hypothesis on $C$. \\
We  are now ready to prove
\begin{proposition}  Let $(M,\psi)$ be  a $4$-plectic manifold acted on by $Sp(1)^k$ in a generalized Hamiltonian fashion. The tri-moment map exists 
 when $b_1(M)=0$.
 \end{proposition}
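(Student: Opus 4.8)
The plan is to build $\s$ componentwise. Since $\mathfrak{sp}(1)$ is three-dimensional, $\Lambda^3\mathfrak{sp}(1)$ is the one-dimensional line spanned by $X\wedge Y\wedge H$, so $(\Lambda^3\mathfrak{sp}(1))^k$ carries the basis $e_1,\dots,e_k$ with $e_i$ equal to $X\wedge Y\wedge H$ in the $i$-th slot and $0$ elsewhere, and a map $\s:M\to\R^k$ amounts to $k$ functions $\s_1,\dots,\s_k\in C^\infty(M)$. Writing $\widetilde{e_i}=\widehat X_i\wedge\widehat Y_i\wedge\widehat H_i$ for the induced trivector field, and setting $\eta_i:=\iota_{\widetilde{e_i}}\psi$, condition (2) of Definition \ref{trimoment} is, by linearity in $\delta$ (every $\delta_i$ being a scalar multiple of $X\wedge Y\wedge H$), equivalent to the $k$ identities $d\s_i=\eta_i$. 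So I would reduce the whole statement to producing, for each $i$, a primitive $\s_i$ of the $1$-form $\eta_i$; since $b_1(M)=0$ forces every closed $1$-form to be exact, it suffices to prove that each $\eta_i$ is closed.

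The heart of the argument will be the computation $d\eta_i=0$. Fix $i$ and abbreviate $A=\widehat X_i$, $B=\widehat Y_i$, $C=\widehat H_i$, so $\eta_i=\iota_C\iota_B\iota_A\psi$. I would use Cartan's formula $\cL_V=d\iota_V+\iota_Vd$ and the identity $[\cL_V,\iota_W]=\iota_{[V,W]}$, feeding in four inputs: $\psi$ is closed ($d\psi=0$); $\Sp(1)^k$ preserves $\psi$, so $\cL_A\psi=\cL_B\psi=\cL_C\psi=0$; the action is generalized Hamiltonian, so $\iota_A\psi$ is exact and in particular $d(\iota_A\psi)=0$; and $A,B,C$ span a copy of $\mathfrak{sp}(1)$, whence (as $X\mapsto\widehat X$ reverses the bracket) each of $[A,B],[B,C],[C,A]$ is a nonzero multiple of $C,A,B$ respectively. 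First, $d(\iota_B\iota_A\psi)=\cL_B(\iota_A\psi)-\iota_B\,d(\iota_A\psi)=\iota_{[B,A]}\psi=\lambda\,\iota_C\psi$ for a constant $\lambda$, using $[B,A]\propto C$. Second, expanding $\cL_C(\iota_B\iota_A\psi)=\iota_B\iota_{[C,A]}\psi+\iota_{[C,B]}\iota_A\psi$ and using $[C,A]\propto B$, $[C,B]\propto A$, both terms vanish because $\iota_B\iota_B=\iota_A\iota_A=0$. Combining,
$$d\eta_i=d\iota_C(\iota_B\iota_A\psi)=\cL_C(\iota_B\iota_A\psi)-\iota_C\,d(\iota_B\iota_A\psi)=0-\lambda\,\iota_C\iota_C\psi=0,$$
again by $\iota_C\iota_C=0$. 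The precise bracket constants are irrelevant: all that matters is that, in the $\mathfrak{sp}(1)$ structure, each bracket lands on exactly the basis vector that is then killed by a repeated interior product.

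To secure $\Sp(1)^k$-invariance of the $\s_i$, I would first observe that $\eta_i$ is itself invariant: $\psi$ is invariant, and $\widetilde{e_i}$ is invariant because $X\wedge Y\wedge H$ spans the $\Ad$-fixed line $\Lambda^3\mathfrak{sp}(1)$ (as already noted before Definition \ref{trimoment}), so $g^*\eta_i=\iota_{(g^{-1})_*\widetilde{e_i}}(g^*\psi)=\eta_i$ for all $g$. Then I would pick any primitive $\widetilde\s_i$ of $\eta_i$ and average over the compact group, $\s_i:=\int_{\Sp(1)^k}g^*\widetilde\s_i\,dg$. Invariance of $\eta_i$ gives $d\s_i=\int_{\Sp(1)^k}g^*\eta_i\,dg=\eta_i$, while $\s_i$ is $\Sp(1)^k$-invariant by right-invariance of Haar measure; since $M$ is connected, each $g^*\widetilde\s_i$ differs from $\widetilde\s_i$ by a constant, so the averaging merely shifts $\widetilde\s_i$ by a constant, consistent with $\s$ being defined only up to a constant in $\R^k$. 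Setting $\s=(\s_1,\dots,\s_k)$ then yields an $\Sp(1)^k$-invariant map fulfilling condition (2), i.e. a tri-moment map.

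The hard part will be the closedness computation $d\eta_i=0$: everything hinges on organizing the Cartan calculus so that the brackets produced by the non-commuting fundamental fields are precisely those annihilated by the repeated contractions $\iota_V\iota_V=0$, which is where both $d\psi=0$ and the $\mathfrak{sp}(1)$-commutation relations are indispensable (note that non-degeneracy of $\psi$ plays no role in this existence argument, and that only closedness, not exactness, of $\iota_A\psi$ is actually used).
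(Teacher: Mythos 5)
Your proposal is correct and takes essentially the same route as the paper's proof: reduce to a single component, prove closedness of the $1$-form $\iota_{\widehat X}\iota_{\widehat Y}\iota_{\widehat H}\psi$ by Cartan's formula and the identity $[\mathcal{L}_U,\iota_W]=\iota_{[U,W]}$, with the $\mathfrak{sp}(1)$ brackets landing on vectors killed by repeated contraction, then use $b_1(M)=0$ for exactness and average over the compact group for invariance. The only cosmetic difference is that you invoke the generalized Hamiltonian hypothesis to get $d(\iota_A\psi)=0$, whereas the paper derives this from $\mathcal{L}_A\psi=0$ and $d\psi=0$; both are valid.
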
  
 \begin{proof} It is enough to prove the statement for  a component of the tri-moment  map. Consider the standard basis $X,Y,H$ of the Lie algebra $\mathfrak{sp}(1)$ defined above.   Recall that  $$[X,Y]=2H,\ \; [Y,H]=2X \;\mbox{and}\ \;[H,X]=2Y.$$  The element  $$\delta_i=(0,0,\ldots,0,\overbrace{X\wedge Y\wedge H}^{i-\textrm{th\; component}},0,\ldots,0)\in ({\Lambda^3(\mathfrak{sp}(1))})^k$$ is identified with the usual canonical basis vector $e_i\in \R^k$.  For each $i$-th component of the tri-moment map, and any tangent vector $Z$   we have
 $$<d\s(Z),\delta_i>=d\s_i(Z)=\psi(\widehat{X}, \widehat{Y},\widehat{H},Z)=\iota_{\widehat X}\iota_{\widehat Y}\iota_{\widehat H}\psi(Z)$$
 So a  necessary and sufficient condition for the existence of  a map  satisfying condition (2)  of Definition \ref{trimoment} is that the $1$-form $\iota_{\widehat X}\iota_{\widehat Y}\iota_{\widehat H}\psi$ is closed.
 We prove the closedness of this form by applying  the well known relations  involving the Lie derivative $\mathcal{L}$  and  valid for any  tangent vector fields $U,W$ 
 \begin{equation}\label{1}\mathcal{L}_U =d\iota_U+\iota_Ud
 \end{equation} and 
 \begin{equation}\label{2} [\mathcal{L}_{{U}},\iota_{{W}}]=\iota_{[{U},{W}]}.\end{equation}
 We compute
$$d(\iota_{\widehat X}\iota_{\widehat Y}\iota_{\widehat H}\psi)=-\iota_{\widehat X}d\iota_{\widehat Y}\iota_{\widehat H}\psi+\mathcal{L}_{\widehat X}\iota_{\widehat Y}\iota_{\widehat H}\psi=-\iota_{\widehat X}d\iota_{\widehat Y}\iota_{\widehat H}\psi+\iota_{\widehat Y}\mathcal{L}_{\widehat X}\iota_{\widehat H}\psi+\iota_{[\widehat{X},\widehat{Y}]}\iota_{\widehat{H}}\psi$$
Now the last term of the equality is zero since $[X,Y]=2H$. Applying again  equation (\ref{2}) the equality above becomes
$$
-\iota_{\widehat X}d\iota_{\widehat Y}\iota_{\widehat H}\psi+\iota_{\widehat Y}\mathcal{L}_{\widehat X}\iota_{\widehat H}\psi=-\iota_{\widehat X}d\iota_{\widehat Y}\iota_{\widehat H}\psi+\iota_{\widehat Y}\iota_{\widehat H}\mathcal{L}_{\widehat X}\psi+\iota_{\widehat{Y}}\iota_{[\widehat{X},{\widehat{H}}]}\psi$$
Both the last two terms are zero: $\mathcal{L}_{\widehat X}\psi=0$ since the action is via $4$-plectomorphisms, and   $[H,X]=2Y$.
Applying twice equation (\ref{1}) and once equation (\ref{2})  and using the closeness of $\psi$ we get that 
$-\iota_{\widehat X}d\iota_{\widehat Y}\iota_{\widehat H}\psi$ equals
$$\iota_{\widehat X}\iota_{\widehat Y}d\iota_{\widehat H}\psi-\iota_{\widehat X}\mathcal{L}_{\widehat Y}\iota_{\widehat H}\psi=\iota_{\widehat X}\iota_{\widehat Y}\mathcal{L}_{\widehat H}\psi-\iota_{\widehat X}\iota_{\widehat Y}\iota_{\widehat H}d\psi-\iota_{\widehat X}\iota_{\widehat H}\mathcal{L}_{\widehat Y}\psi-\iota_{\widehat{X}}\iota_{[\widehat{Y},{\widehat{H}}]}\psi=0
$$
Now by averaging over the group, which is compact, we get an invariant tri-moment map.
\end{proof}
 \begin{proposition} \label{moment} The  multiplicative action of $Sp(1)^m$ on $(\H^m,\psi_0)$ given by $$(\lambda_1,\lambda_2,\ldots,\lambda_m)(q_1,q_2,\ldots,q_m):=(\lambda_1\cdot q_1,\lambda_2\cdot q_2,\ldots,\lambda_m\cdot q_m)$$ is generalized Hamiltonian, and the tri-moment map is given by $$\s(q_1,q_2,\ldots,q_m)=-\frac{1}{4}(|q_1|^4,|q_2|^4,\ldots,|q_m|^4)+C$$
\end{proposition}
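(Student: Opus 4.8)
The plan is to exploit the product structure of both the action and the form, reducing everything to a single quaternionic factor. Since $\psi_0 = \sum_{i=1}^m \psi_0^{(i)}$ with $\psi_0^{(i)} = dx_{4i-3}\wedge dx_{4i-2}\wedge dx_{4i-1}\wedge dx_{4i}$, and the $i$-th factor of $Sp(1)^m$ acts only on the $i$-th copy of $\H$, it suffices to analyze left multiplication by a single $Sp(1)$ on $(\H, dx_1\wedge dx_2\wedge dx_3\wedge dx_4)$ and then to sum over factors. First I would check that the action preserves $\psi_0$: identifying $\H$ with $\R^4$ via $q = x_1 + x_2 i + x_3 j + x_4 k$, left multiplication by a unit quaternion $\lambda$ is a linear isometry of $\R^4$ of determinant $+1$, i.e.\ an element of $\SO(4)$, and therefore fixes the volume form $dx_1\wedge dx_2\wedge dx_3\wedge dx_4$. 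Hence the action is by $4$-plectomorphisms and $\mathcal L_{\widehat Z}\psi_0 = 0$ for every $Z\in\mathfrak{sp}(1)^m$, which is exactly the invariance hypothesis used in the existence argument above.

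Next I would write down the fundamental vector fields explicitly. Because $H, X, Y$ represent the imaginary units $i, j, k$ and the group exponential corresponds to the quaternionic exponential, the fundamental vector fields of a single factor are the linear vector fields $q\mapsto iq$, $q\mapsto jq$, $q\mapsto kq$. Translating into real coordinates gives, for instance, $\widehat H = -x_2\partial_{x_1} + x_1\partial_{x_2} - x_4\partial_{x_3} + x_3\partial_{x_4}$, with analogous expressions for $\widehat X$ and $\widehat Y$ coming from multiplication by $j$ and $k$.

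The core of the proof is then the computation, carried out factor by factor, of the $1$-form $\iota_{\widehat X}\iota_{\widehat Y}\iota_{\widehat H}\psi_0^{(1)}$ that appears in condition (2) of Definition \ref{trimoment} for $\delta_i = X\wedge Y\wedge H$. Evaluating this contraction on each coordinate field $\partial_{x_\ell}$ amounts to a $4\times 4$ determinant whose columns are the three fundamental vector fields and $\partial_{x_\ell}$; expanding it, I expect each such determinant to collapse, after cancellation of the mixed cubic terms, to $\pm\,x_\ell(x_1^2+x_2^2+x_3^2+x_4^2) = \pm\, x_\ell|q|^2$. Recognizing that $|q|^2\sum_\ell x_\ell\,dx_\ell = \tfrac14\,d(|q|^4)$, this identifies the contraction with $d\bigl(-\tfrac14|q_1|^4\bigr)$, the sign being fixed by the orientation convention $X\wedge Y\wedge H\mapsto 1$. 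Since the contraction is exact, each $\widehat Z$ is Hamiltonian and the action is generalized Hamiltonian; summing over the $m$ factors, and noting that the fundamental fields of the $i$-th factor annihilate $\psi_0^{(j)}$ for $j\neq i$, yields $d\sigma_i = d\bigl(-\tfrac14|q_i|^4\bigr)$, hence $\sigma = -\tfrac14(|q_1|^4,\dots,|q_m|^4) + C$. Finally, the $Sp(1)^m$-invariance of $\sigma$ (condition (1)) is immediate since $|\lambda_i q_i| = |q_i|$ for unit $\lambda_i$, while the additive constant $C$ is precisely the indeterminacy noted after Definition \ref{trimoment}.

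The only genuine obstacle is the bookkeeping: correctly reading off the fundamental vector fields from the chosen matrix basis and its quaternionic interpretation, and then tracking the signs and the numerical factor $\tfrac14$ through the determinant computation, so that the contraction matches exactly the differential of $-\tfrac14|q|^4$ rather than a constant multiple or the opposite sign.
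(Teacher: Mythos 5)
Your proposal is correct and follows essentially the same route as the paper's proof: reduce to a single factor $m=1$, write the fundamental vector fields $\widehat H,\widehat X,\widehat Y$ explicitly as the linear fields $q\mapsto iq, jq, kq$, compute the contraction of $\psi_0$ along their wedge (your determinant expansion collapses, exactly as you expect, to $-|q|^2\sum_\ell x_\ell\,dx_\ell = d\bigl(-\tfrac14|q|^4\bigr)$), and sum over factors. The only additions relative to the paper are your explicit $\SO(4)$ verification that the action preserves $\psi_0$ and the invariance check $|\lambda_i q_i|=|q_i|$, both of which the paper leaves implicit.
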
 
\begin{proof} It is sufficient to compute the  tri-moment map for $m=1$ since each factor of $Sp(1)^m$ acts on each $\H$ separately. We firstly determine the fundamental vector fields,  starting from $H,X,Y$ previously defined, at $q_1=x_1+ix_2+jx_3+kx_4\in \H$:
$$\widehat H_{q_1}=-x_2\frac{\partial}{\partial x_1}+x_1\frac{\partial}{\partial x_2}-x_4\frac{\partial}{\partial x_3}+x_3\frac{\partial}{\partial x_4};$$
$$\widehat X_{q_1}=-x_3\frac{\partial}{\partial x_1}+x_4\frac{\partial}{\partial x_2}+x_1\frac{\partial}{\partial x_3}-x_2\frac{\partial}{\partial x_4};$$
$$\widehat Y_{q_1}=-x_4\frac{\partial}{\partial x_1}-x_3\frac{\partial}{\partial x_2}+x_2\frac{\partial}{\partial x_3}+x_1\frac{\partial}{\partial x_4}.$$
Denoting by $e_{ijk}=\frac{\partial}{\partial x_i}\wedge\frac{\partial}{\partial x_j}\wedge\frac{\partial}{\partial x_k}$ the $3$-vector $\widehat{H}\wedge\widehat X\wedge\widehat Y$ at $q_1$ is therefore given by
$$ (-x_4 e_{123}+x_3 e_{124}+x_1 e_{234}-x_2 e_{134})\cdot \sum_{i=1}^4 (x_i)^2$$
Thus, the contraction of $\psi_0$ along $\widehat{H}_{q_1}\wedge\widehat X_{q_1}\wedge\widehat Y_{q_1}$, is
$$\iota_{\widehat{H}_{q_1}\wedge\widehat X_{q_1}\wedge\widehat Y_{q_1}}\psi_0=-|q_1|^2(\sum_{i=1}^4 x_i dx_i).$$
By  the definition of the tri-moment map  we get that 
$$\iota_{\widehat{H}_{q_1}\wedge\widehat X_{q_1}\wedge\widehat Y_{q_1}}\psi_0(\cdot)=d\s_{q_1}(\cdot)(H\wedge X\wedge Y)=\sum_{i=1}^4 \frac{\partial \s}{\partial x_i} dx_i$$ thus we conclude that  the first component of the moment map is $$\s(q_1)=-\frac{1}{4}|q_1|^4+C_1,$$and so we get the claim.
\end{proof}
\noindent Note that also  $(\lambda,q)\mapsto q\cdot \lambda^{-1}$ defines a multiplicative action of $Sp(1)$ on $\mathbb{H}$ which is still generalized Hamiltonian with the same tri-moment map $\sigma(q)=-|q|^4/4+C$.
Hence all the actions of the type
$$(\lambda_1,\lambda_2,\ldots,\lambda_m)(q_1,q_2,\ldots,q_m):=(\lambda_1\cdot q_1,\ldots,\lambda_k\cdot q_k,q_{k+1}\cdot \lambda_{k+1}^{-1},\ldots,q_m\cdot\lambda_m^{-1} )$$ are generalized Hamiltonian as well.\\
\noindent Inspired by the definition of toric symplectic manifolds we give the following
\begin{definition}\label{toric}  Let $M$ be a connected, compact, $4m$-dimensional, $4$-plectic manifold on which  $\Sp(1)^m$ acts effectively in a generalized Hamiltonian fashion with discrete principal isotropy. Then $M$ is called a {\em quaternionic toric manifold.}
 \end{definition}

\section{Towards a Convexity theorem}\label{convex} \noindent In this section  we prove a theorem  on the {\em sub-convexity} of the image of the tri-moment map.  This can be done under some additional hypotheses on the $4$-plectic form. The Darboux Theorem, that allows a local canonical expression for symplectic forms, does not  hold in general for $4$-plectic forms. Our assumptions in Theorem \ref{convexity} compensate for this lack.
A general convexity result can be proven for quaternionic Flag manifolds, \cite{F},   and in particular for quaternionic projective spaces, quaternionic Grassmannians and moreover for the Blow-ups of $\mathbb{HP}^m$ (see Remark \ref{conv}). We do not know if a convexity result holds for every $4$-plectic manifold acted on by $Sp(1)^k$.
Recall that in Atiyah's proof of  the convexity of the image of the moment map for a toric manifold, \cite{At}, a key ingredient is the fact that the moment map is a Morse-Bott function, i.e. a function with non-degenerate Hessian at the critical points. This does not hold in general for the tri-moment map; indeed for example the tri-moment map  $q\mapsto {-|q|^4}/{4}$ for the action of $Sp(1)$ on $(\H,\psi_0)$ has degenerate Hessian at the critical point.
 Moreover observe that even if the components of the tri-moment map are {\em minimally degenerate} in the sense of Kirwan \cite{Kir}, we cannot conclude that the image is convex. To see this consider the usual moment map of the complex projective space $\C\P^2$,  whose image is the standard simplex $\Delta_2\subseteq \R^2$; taking the square of the components of this map (which are Morse-Bott)  we get minimally degenerate functions (see p.290  in \cite{Kir}), whose image is not convex, but still contained in the convex envelope of the three vertices of the simplex $\Delta_2$.\\

 \noindent Take $(M,\psi)$ a $4$-plectic $4m$-dimensional manifold acted on in a generalized Hamiltonian fashion by $Sp(1)^m$ with tri-moment map $\s.$  Denote by $\s_i$  for $i=1,\ldots,m $ its components.  The critical set of each $\s_i$ will be denoted by $C_i={\rm{Crit}}\ \s_i$. The function $\s_i$ is constant on each connected component of 
${\rm{Crit}}\ \s_i.$\\
Consider, for each point $p\in M$, the set of linearly independent tangent vectors $\{\widehat{H}_p^1,\widehat{X}_p^1,\widehat{Y}_p^1,\ldots,\widehat{H}_p^m,\widehat{X}_p^m,\widehat{Y}_p^m\}$ where for each $H^i, X^i, Y^i$ in  the $i$-th term of the Lie algebra $\oplus_{i=1}^m \mathfrak{sp}(1)$ we have defined the fundamental vector fields  $\widehat{H}^i,\widehat X^i,\widehat Y^i$.
 We assume that it is possible to decompose the tangent space at each point $p\in M$ as the direct sum of  $m$ $4$-dimensional subspaces  $\{V_i\}_{i=1}^m$  with $\widehat{H}_p^i,\widehat{X}_p^i,\widehat{Y}_p^i\subseteq V_i$ in such a way that the restriction of the form $\psi$ to $V_i$ is non degenerate. In this case we call $\psi$ {\em strongly non degenerate}. Under this assumption it is possible to  define an isomorphism $L_{\psi}:\Lambda^3(V_i)\to V_i$ that allows  to construct a basis of $T_p M$ given by 
 $\{\widehat{H}_p^i,\widehat{X}_p^i,\widehat{Y}_p^i,\widetilde{\delta_p^i}\}_{i=1}^m=\{v_1^i,v_2^i,v_3^i,v_4^i\}_{i=1}^m$   where $\widetilde{\delta_p^i}=L_{\psi}(\widehat{H}_p^i\wedge \widehat{X}_p^i,\wedge \widehat{Y}_p^i)$ for all $i$.\\\\The critical points of each component of the tri-moment map $\s_i$  can be easily found using the defining properties of $\s$. Indeed a critical point $q$ of $\s_i$ is such that
 $$0=d\s_{i}(q)(v)=\iota_{\widetilde{\delta}^i}\psi_q(v)=\psi_q(v,\widehat{H}^i_q,\widehat {X}^i_q,\widehat {Y}^i_q)$$ for all $v\in T_q M$. Therefore  $q$ is a fixed point of at least one of  the $1$-parameter subgroups generated by $H^i,X^i$ or $Y^i$.
Hence a point $q$ in $M$ is critical for each component $\sigma_i$ if and only  if the isotropy of $q$ has maximal rank (i.e. contains a maximal torus $T\subseteq \Sp(1)^m$). The previous fact implies that $\cap_{i=1}^m C_i$ is a closed submanifold of the compact manifold $M$, and therefore it has a finite number of connected components.
 Since $\sigma$ is constant on the connected components of $\cap_{i=1}^{m} C_i$ , then $A=\sigma(\cap_{i=1}^{m} C_i)$  is a finite set.
Our aim is, now, to show that the image of $M$ via $\s$ is contained in the convex envelope of $A$, ${\rm Conv}(A)$ . The key ingredient, inspired by \cite{Kir}, is the following 
 \begin{lemma} \label{lemmaA} Let $F=(f_1,\ldots,f_m)$  with $f_i$ real-valued differentiable functions on a compact manifold $M$ such that their gradient vector fields  commute.  Assume that ${\rm{Crit}}\ f_i$ is a  submanifold for $i=1,\ldots,m$. Setting
 $B=F\left(\cap_{i=1}^{m} {\rm{Crit} }\ f_i\right)$, then we have $F(M)\subseteq \rm{Conv} (B).$
 \end{lemma}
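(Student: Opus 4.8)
The plan is to argue by contradiction using a separating-hyperplane (supporting-functional) argument. Since $B = F(\cap_{i=1}^m \mathrm{Crit}\, f_i)$ and $\mathrm{Conv}(B)$ is a closed convex set, if some point $F(p_0)$ fails to lie in $\mathrm{Conv}(B)$ then there is a linear functional $\ell \in (\R^m)^*$, say $\ell(y) = \sum_{i=1}^m a_i y_i$, that strictly separates $F(p_0)$ from $\mathrm{Conv}(B)$: one has $\ell(F(p_0)) > \sup_{b \in B} \ell(b)$. The natural object to study is then the single real-valued function $g := \ell \circ F = \sum_{i=1}^m a_i f_i$ on the compact manifold $M$. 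Because $M$ is compact, $g$ attains its maximum at some point $q \in M$, and by construction $g(q) \geq g(p_0) = \ell(F(p_0)) > \sup_{b \in B} \ell(b)$.

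First I would establish that the maximizer $q$ of $g$ must lie in $\cap_{i=1}^m \mathrm{Crit}\, f_i$; this is the heart of the argument and the place where the commuting-gradients hypothesis enters. At a maximum of $g$ we have $\nabla g (q)= \sum_i a_i \nabla f_i(q) = 0$, but this alone does not force each $\nabla f_i(q)$ to vanish. The idea is to exploit that the gradient vector fields commute, so their flows $\phi_t^i$ generate a commuting family and the level sets/critical manifolds interact nicely. Concretely, I would consider the flow of the individual fields $\nabla f_i$ starting at $q$: since the $\nabla f_i$ commute, moving along $\nabla f_j$ does not change the value of the bracket structure, and one can analyze how each $f_i$ varies. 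The cleanest route is to use that $q$ maximizes $g$ along every integral curve of every $\nabla f_i$ through $q$; combined with commutativity one shows that $q$ is simultaneously critical for each $f_i$, i.e. $q \in \cap_{i=1}^m \mathrm{Crit}\, f_i$. (An alternative is to induct: maximize $g$ first, then use the commuting flows to push the maximizer onto each critical submanifold in turn without decreasing $g$, the commutativity guaranteeing the successive pushes are compatible.)

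Once $q \in \cap_{i=1}^m \mathrm{Crit}\, f_i$ is secured, the contradiction is immediate: then $F(q) \in B$, so $\ell(F(q)) = g(q) \leq \sup_{b\in B}\ell(b)$, contradicting $g(q) > \sup_{b \in B}\ell(b)$ obtained above. Hence no separating functional exists and $F(M) \subseteq \mathrm{Conv}(B)$, as claimed.

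The main obstacle I expect is precisely the step forcing the maximizer $q$ of the combined function $g = \sum_i a_i f_i$ to be a common critical point of all the $f_i$. The vanishing of the single gradient $\sum_i a_i \nabla f_i$ is far weaker than the simultaneous vanishing of the individual gradients, and without the commutativity hypothesis the statement would simply be false (one can have a maximum of a sum at a non-critical point of the summands). The role of commuting gradient fields is to let one flow along each $\nabla f_i$ separately while controlling the other functions, so I would need to make the flow argument rigorous: that following $\nabla f_i$ from a maximizer of $g$ cannot increase $g$ yet must not decrease $f_i$ unless $\nabla f_i(q)=0$, and that commutativity keeps the competing gradients aligned so no such increase is possible. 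Handling the case where $\mathrm{Crit}\, f_i$ is a positive-dimensional submanifold (rather than isolated points) and verifying that the flows stay within the relevant critical loci is the delicate technical point; the compactness of $M$ and the submanifold hypothesis on each $\mathrm{Crit}\, f_i$ are exactly what I would lean on to make this go through.
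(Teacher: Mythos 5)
Your overall skeleton---hyperplane separation plus maximization of the scalar function $g=\sum_i a_i f_i$---is the same as the paper's, but the crux of the proof is missing, and the claim you build it on is false as stated. You assert that a maximizer $q$ of $g$ must lie in $Z=\cap_{i=1}^m \mathrm{Crit}\, f_i$. This fails already in simple cases: take $M=S^2\times S^2$, let $f_1$ and $f_2$ be the height functions of the two factors (their gradients commute and their critical sets are submanifolds), and take $a=(1,0)$; then the maximizers of $g=f_1$ form $\{N\}\times S^2$, almost none of which lie in $Z=\{N,S\}\times\{N,S\}$. What your contradiction actually needs is weaker---that the maximum \emph{value} of $g$ is attained at \emph{some} point of $Z$---but your proposed mechanism does not deliver that either: starting at a maximizer $q$ and flowing along $\nabla f_i$, commutativity gives $(\phi^i_t)_*\nabla g=\nabla g$, so the trajectory stays inside $\mathrm{Crit}\, g$ and $g$ remains constant while $f_i$ increases; there is no tension between ``$g$ cannot increase'' and ``$f_i$ does not decrease,'' hence no way to conclude $\nabla f_i(q)=0$. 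So the heart of the lemma is still open in your write-up, as you yourself acknowledge.

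The paper closes this gap by a different device: it applies the separation theorem with a \emph{generic} $(\lambda_1,\dots,\lambda_m)$ (legitimate, since strict separation is an open condition on the functional), observes that the commuting gradient fields generate a torus $T$ whose fixed-point set is exactly $Z$, and notes that for generic $\lambda$ the flow of $\sum_i\lambda_i\,\mathrm{grad}\, f_i$ generates $T$, so that the critical set of $\varphi=\sum_i\lambda_i f_i$ is precisely $Z$; the maximum of $\varphi$, being attained at a critical point, is then attained on $Z$. Interestingly, the alternative you mention only in parentheses---pushing a maximizer successively into each $\mathrm{Crit}\, f_i$ along the commuting flows---is the germ of a complete (and genericity-free) argument: each $\mathrm{Crit}\, f_j$ and $\mathrm{Crit}\, g$ is invariant under every flow $\phi^i_t$ (again because $(\phi^i_t)_*$ fixes the gradient fields), $g$ is constant along these flows on $\mathrm{Crit}\, g$, and the $\omega$-limit of the $\nabla f_i$-trajectory of a maximizer lies in $\mathrm{Crit}\, f_i$ by the standard gradient-flow argument on a compact manifold; iterating over $i=1,\dots,m$ produces a maximizer of $g$ inside $Z$. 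But since you never execute that route, the proposal as written does not constitute a proof.
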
 
 \begin{proof} In view of the Hyperplane Separation Theorem, it is sufficient to prove that, for a generic $(\lambda_1,\lambda_2,\ldots,\lambda_m)\in \mathbb{R}^m,$ the restriction  of the  linear functional $y \mapsto  \sum_{i=1}^{m}\lambda_i y_i,$ to $F(M)$ takes its maximum value at a point of $B$. Equivalently, it is enough to prove that, for a generic  $(\lambda_1,\lambda_2,\ldots,\lambda_m)$, the maximum of the differentiable function $\varphi=\sum_{i=1}^{m}\lambda_i f_i$ defined on $M$   is attained in $Z=\cap_{i=1}^{m} {\rm{Crit} }\ f_i$. 
 The set $Z$ of common critical points of $f_1,\ldots,f_m$ is also the fixed-point set of the torus $T$ generated by the fields ${\rm{grad}} f_1,\ldots, {\rm{grad}} f_m$. 
 Moreover if $\sum_{i=1}^m\lambda_i f_i$ is a generic linear combination, so that the corresponding gradient vector field generates T, then its critical set  is precisely $Z$, and in particular $\varphi$ takes its maximum on $Z$.
\end{proof}
 Let us  assume, as a further hypothesis, that the form $\psi$ is zero whenever computed on at least two vectors $v^i\in V_i,$ and $v^j\in V_j$ belonging to two different $4$-dimensional subspaces of the tangent space $T_pM$. Then we can define a diagonal metric $g$ in terms of $\psi$ in the following way:
 $$
 g_p(v_j^i,L_{\psi}(v_t^k\wedge v_s^k \wedge v_r^k))=\psi_p(v_j^i,v_t^k,v_s^k,v_r^k)$$
 for any $i,k=1,\ldots,m$ and $j,t,s,r=1,\ldots,4.$
Hence
$$g_p(v_j^i,L_{\psi}(v_t^k\wedge v_s^k \wedge v_r^k))=0\;\;{\rm{for}}\;i\neq k\;\;{\rm{or}}\; \;j=t,s,r.$$
Observe that
the metric $g$ is non degenerate since $\psi$ is strongly non degenerate. 
Whenever the form $\psi$ satisfies all the previous assumptions we call it {\em partitioned strongly non degenerate}.
 
With the previous notations and with an additional algebraic assumption, we can prove
 \begin{theorem} \label{convexity} Let $(M, \psi)$ be a compact, connected, $4m$-dimensional  manifold equipped with a partitioned strongly non degenerate $4$-plectic form,  acted on in generalized Hamiltonian fashion by $Sp(1)^m$ with tri-moment map $\s=(\s_1,\ldots,\s_m)$. Suppose that  $v_4^j(g_p(v_4^k,v_4^k))=0$ for all $j,k=1,\ldots,m$. Then $\s(M)$  is contained in the convex envelope of the points of $A=\s\left(\cap_{i=1}^{m} {\rm{Crit} }\ \s_i\right)$.\end{theorem}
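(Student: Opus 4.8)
The plan is to reduce the statement to Lemma \ref{lemmaA} applied to $F=\s=(\s_1,\dots,\s_m)$: once its two hypotheses are verified, the conclusion reads $\s(M)\subseteq{\rm Conv}(B)$ with $B=\s(\cap_{i=1}^m{\rm Crit}\,\s_i)=A$, which is exactly the assertion. So everything comes down to (i) producing the gradient vector fields of the $\s_i$ and showing they pairwise commute, and (ii) checking that each ${\rm Crit}\,\s_i$ is a submanifold.

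For the gradients I would use the metric $g$ associated to the partitioned strongly non degenerate form. Since $v_4^i=L_\psi(\widehat H^i\wedge\widehat X^i\wedge\widehat Y^i)$ and $g(w,L_\psi(a\wedge b\wedge c))=\psi(w,a,b,c)$ by construction, the defining relation of the tri-moment map gives, for every tangent vector $w$,
$$d\s_i(w)=\psi(w,\widehat H^i,\widehat X^i,\widehat Y^i)=g(w,v_4^i),$$
so that ${\rm grad}\,\s_i=v_4^i$: the gradients are precisely the ``fourth'' frame vectors, one per block $V_i$. Using symmetry of $g$ and the definition once more, $g(v_4^i,v_a^k)=\psi(v_a^k,v_1^i,v_2^i,v_3^i)$, which vanishes for $k\neq i$ (block structure) and for $k=i$, $a\in\{1,2,3\}$ (repeated vector); thus each $v_4^i$ is $g$-orthogonal to the whole frame except to itself, while $g(v_4^i,v_4^i)=\psi(v_4^i,v_1^i,v_2^i,v_3^i)\neq0$ by strong non-degeneracy.

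The core of the proof is the commutation $[v_4^i,v_4^j]=0$. By non-degeneracy of $g$ it suffices to pair $[v_4^i,v_4^j]$ with every frame vector. Pairing with $v_4^k$ gives
$$g([v_4^i,v_4^j],v_4^k)=[v_4^i,v_4^j]\,\s_k=v_4^i\big(v_4^j\s_k\big)-v_4^j\big(v_4^i\s_k\big),$$
and since $v_4^j\s_k=g(v_4^j,v_4^k)=\delta_{jk}\,g(v_4^k,v_4^k)$ this collapses to $\delta_{jk}\,v_4^i(g(v_4^k,v_4^k))-\delta_{ik}\,v_4^j(g(v_4^k,v_4^k))$, which is zero precisely by the standing hypothesis $v_4^j(g_p(v_4^k,v_4^k))=0$. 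Hence $[v_4^i,v_4^j]$ has no component along the gradient distribution. For the orbit directions $\widehat Z^k\in\{v_1^k,v_2^k,v_3^k\}$ I would exploit invariance: each $\s_i$ is $\Sp(1)^m$-invariant and $\widehat H^i\wedge\widehat X^i\wedge\widehat Y^i$ is ${\rm Ad}$-invariant (it is the volume element of $\mathfrak{sp}(1)$), so the frame, the metric $g$, and therefore each gradient $v_4^i$ are preserved by the action, giving $[\widehat Z^k,v_4^i]=0$; combined with the identity $g(\widehat Z^k,v_4^j)\equiv0$ established above, this should force $g(\widehat Z^k,[v_4^i,v_4^j])=0$ and hence $[v_4^i,v_4^j]=0$.

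Finally, the analysis preceding the theorem shows that $q\in{\rm Crit}\,\s_i$ iff $\widehat H^i\wedge\widehat X^i\wedge\widehat Y^i$ degenerates at $q$, i.e. iff the $i$-th factor has positive-dimensional isotropy there; this fixed-point locus is a closed submanifold, $\cap_{i=1}^m{\rm Crit}\,\s_i$ is the closed (finite-component) locus of maximal-rank isotropy, and $A=\s(\cap_{i=1}^m{\rm Crit}\,\s_i)$ is finite. With both hypotheses in force, Lemma \ref{lemmaA} applied to $\s$ yields $\s(M)\subseteq{\rm Conv}(A)$. The main obstacle is the vanishing of the orbit-direction components of $[v_4^i,v_4^j]$: the gradient-distribution part dies cleanly on the algebraic hypothesis, but controlling the vertical part is exactly where the absence of a $4$-plectic Darboux theorem bites, and it must be absorbed by the invariance of the frame together with the full strength of the partitioned non-degeneracy of $\psi$.
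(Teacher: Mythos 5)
Your overall strategy coincides with the paper's: reduce to Lemma \ref{lemmaA}, identify ${\rm grad}\,\s_i=v_4^i=\widetilde{\delta^i}$ via the metric $g$, and prove $[v_4^i,v_4^j]=0$. Your treatment of the gradient-direction components is correct and is in fact a cleaner formulation of what the paper does: the computation
$$g([v_4^i,v_4^j],v_4^k)=[v_4^i,v_4^j]\s_k=\delta_{jk}\,v_4^i(g(v_4^k,v_4^k))-\delta_{ik}\,v_4^j(g(v_4^k,v_4^k))=0$$
is precisely where the hypothesis $v_4^j(g_p(v_4^k,v_4^k))=0$ enters the paper's proof (its ``second case''), and your version handles all $k$ uniformly, including $k\neq i,j$, which the paper dismisses as trivial.

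The genuine gap is in the orbit directions, exactly where you hedge (``this should force''). First, the invariance of $g$ and of the $v_4^i$ requires that the block decomposition $\{V_i\}$ itself be preserved by the action, an assumption the theorem never states (the individual fundamental fields are certainly not invariant; only the $3$-vectors $\widehat H^i\wedge\widehat X^i\wedge\widehat Y^i$ are). More seriously, even granting all of that, the implication fails formally: from $[\widehat Z^k,v_4^i]=0$ the Jacobi identity gives that $W:=[v_4^i,v_4^j]$ is an invariant vector field, and combining $g(W,v_4^k)=0$ with the diagonality of $g$ shows $W$ is tangent to the $Sp(1)^m$-orbits; but an invariant vector field tangent to the orbits need not vanish. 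On a principal orbit, which is $Sp(1)^m$ modulo a discrete subgroup, the invariant fields tangent to the orbit form a $3m$-dimensional space (the translations from the opposite side), so invariance plus verticality plus $g$-orthogonality to the gradients is perfectly compatible with $W\neq 0$. Hence $g(\widehat Z^k,W)=0$ does not follow from $[\widehat Z^k,v_4^i]=0$ and $g(\widehat Z^k,v_4^j)\equiv 0$; some genuinely metric input is required. This is what the paper's ``first case'' supplies, however tersely: for triples $(v_4^i,v_b^i,v_c^i)$ it kills $\iota_W\psi$ by differentiating the identities $g(v_4^j,v_a^i)\equiv 0$ using the compatibility of $g$ with its Levi-Civita connection, together with diagonality. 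To repair your proof you would need to carry out that covariant-derivative computation (or an equivalent one); invariance alone cannot close the vertical part.
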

 \begin{proof} In order to prove the theorem we show that 
 $\left[{\rm{grad} }\s_i,{\rm{grad}}\s_j\right]=0$ for all $i,j$. 
We fix  a basis in $\R^m\cong (\Lambda^3\mathfrak{sp}(1))^m$ given by $\delta^1,\delta^2,\ldots,\delta^m$ where $\delta^i=H^i\wedge X^i\wedge Y^i$ for all $i$.  We have that
the differential of the  $i$-th component of the tri-moment map is such that
$$(d\s_{i})_p(v)=d\s_p(v)({\delta^i})=\iota_{v_1^i}\iota_{v_2^i}\iota_{v_3^i}\psi_p(v)=\psi_p(v,v_1^i,v_2^i,v_3^i)=g_p(v,v_4^i)=g_p(v,\widetilde{\delta^i}_p)$$
so that ${{(\rm{grad }}\s_i)}_p=\widetilde{\delta^i}_p$ for all $i=1,\ldots,m$, and 
$$[{\rm{grad}} \s_i,{\rm{grad} }\s_j]=[\widetilde{\delta^i},\widetilde{\delta^j}].$$
Let us now prove that $[\widetilde{\delta^i},\widetilde{\delta^j}]=0,$ i.e. that $$\iota_{[\widetilde{\delta^i},\widetilde{\delta^j}]}\psi_p(a,b,c)=0$$ for any three vectors $a,b,c\in T_p M$. 
First observe that if $a,b,c$  do not belong to the same $V_k$ in the decomposition of the tangent space or if $k\neq i,j$   then the equality is trivially true. Then, assuming that $k=i$ there are two possibilities:
without loss of generality, either  $a=\widetilde{\delta^i}$ or $(a,b,c)=(v_1^i,v_2^i,v_3^i)$.
The fact that $g$  is diagonal combined  with compatibility of the metric with  the Levi Civita connection, gives in the first case $\iota_{[\widetilde{\delta^i},\widetilde{\delta^j}]}\psi_p(a,b,c)=0$,
while in the second $\iota_{[\widetilde{\delta^i},\widetilde{\delta^j}]}\psi_p(a,b,c)=-\widetilde{\delta}^j g_p(\widetilde{\delta}^i, \widetilde{\delta}^i )=v_4^j(g_p(v_4^i,v_4^i))$ which vanishes by the hypothesis.
We can now apply Lemma \ref{lemmaA} and get the claim.
\end{proof}

\noindent In the particular case of a toric quaternionic manifold, we can find a bound on the cardinality of the set $A$. To prove this we give the following definition
\begin{definition} Let $G$ be a compact Lie group acting on a manifold $M$ with principal isotropy $G_{princ}.$ We call the {\em homogeneity rank}  the numerical invariant of the action
$$hrk(M,G):=\rank(G)-\rank(G_{princ})-\dim M+\dim G-\dim (G_{princ})$$ where $\rank(G)$ is the dimension of a maximal torus $T$ in $G$.
\end{definition}
\begin{theorem}\cite{Br} If $M^T$ is not empty and $hrk(M,G)\leq 0$ then $M^T$ is finite, and moreover its cardinality is equal to the Euler Characteristic of $M$.
\end{theorem}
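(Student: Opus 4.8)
The plan is to split the statement into two logically independent parts: the \emph{finiteness} of $M^T$, which is a dimension count driven by the hypothesis $hrk(M,G)\le 0$, and the \emph{counting} identity $|M^T|=\chi(M)$, which is the classical Euler characteristic fact for torus actions and needs only that $M^T$ is finite. I would dispose of the counting part first. For any smooth action of a torus $T$ on a compact manifold one has $\chi(M)=\chi(M^{T})$: choosing a one-parameter subgroup $S^{1}\subseteq T$ with dense image gives $M^{S^{1}}=M^{T}$, and for a circle action the generating vector field is nowhere zero off its zero set $M^{S^{1}}$, so the open complement has vanishing Euler characteristic and $\chi(M)=\chi(M^{S^{1}})=\chi(M^{T})$. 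Once $M^{T}$ is known to be a finite set, each point contributes $1$, whence $\chi(M^{T})=|M^{T}|$ and therefore $|M^{T}|=\chi(M)$.

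The substance of the proof is thus to show that the hypothesis forces $\dim M^{T}=0$. Since $M^{T}$ is the fixed set of the compact group $T$ it is a closed submanifold, and it suffices to bound $\dim T_{p}(M^{T})=\dim(T_{p}M)^{T}$ at a point $p\in M^{T}$, which exists by assumption. By the slice theorem a $G$-neighbourhood of $p$ is modelled on $G\times_{G_{p}}\nu_{p}$, so $T_{p}M\cong \mathfrak{g}/\mathfrak{g}_{p}\oplus\nu_{p}$ as a $G_{p}$-module, where $\nu_{p}$ is the slice representation. Because $T\subseteq G_{p}$ and $T$ is a maximal torus of $G$, it is a maximal torus of the identity component $G_{p}^{0}$ as well; hence the $T$-fixed subspace of the adjoint module is the Cartan subalgebra in both $\mathfrak{g}$ and $\mathfrak{g}_{p}$, and exactness of the invariants functor for the compact group $T$ gives $(\mathfrak{g}/\mathfrak{g}_{p})^{T}\cong\mathfrak{g}^{T}/\mathfrak{g}_{p}^{T}=\mathfrak{t}/\mathfrak{t}=0$. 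So the entire tangent-to-orbit part carries no $T$-fixed vector and $\dim M^{T}=\dim(\nu_{p})^{T}$, the multiplicity of the trivial summand in the slice representation.

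It remains to estimate this multiplicity, and this is the step where the homogeneity rank enters and which I expect to be the main obstacle. The generic isotropy of the $G_{p}$-action on $\nu_{p}$ is the principal isotropy $G_{princ}$, so $\nu_{p}/G_{p}$ has dimension equal to the cohomogeneity $\dim(M/G)=\dim M-\dim(G/G_{princ})$, while every point of $(\nu_{p})^{T}$ has full-rank isotropy $\rank G_{p}=\rank G$. Projecting $(\nu_{p})^{T}$ to $\nu_{p}/G_{p}$ is finite-to-one, since a full-rank-isotropy orbit meets the linear slice $(\nu_{p})^{T}$ in a single $N_{G_{p}}(T)$-orbit; hence $\dim M^{T}$ is bounded by the dimension of the locus of full-rank-isotropy orbits. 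Raising the isotropy rank from $\rank G_{princ}$ on the principal stratum to the full rank $\rank G$ forces this locus to have codimension at least $\rank G-\rank G_{princ}$ among the $\dim(M/G)$ orbit-space parameters, so that
\[
\dim M^{T}\ \le\ \dim(M/G)-\bigl(\rank G-\rank G_{princ}\bigr)\ =\ -\,hrk(M,G).
\]
In the toric regime governed by the hypothesis, where $\rank G-\rank G_{princ}=\dim(M/G)$, i.e.\ $hrk(M,G)=0$, the right-hand side vanishes, so $M^{T}$ is a compact $0$-manifold and therefore finite; combined with the Euler characteristic identity of the first paragraph this yields $|M^{T}|=\chi(M)$. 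The delicate point, deserving the most care, is exactly the codimension estimate for the rank-jumping stratum: it must be extracted from the $T$-weight decomposition of $\nu_{p}$, using that a generic vector is fixed only by a subtorus of rank $\rank G_{princ}$ whereas the origin is fixed by all of $T$, so at least $\rank G-\rank G_{princ}$ independent weight directions must be switched off to reach $(\nu_{p})^{T}$.
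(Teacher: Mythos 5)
The first thing to note is that the paper does not prove this statement at all: it is quoted from Bredon \cite{Br} as background, so your argument has to stand entirely on its own. Its skeleton is sound as far as it goes: $\chi(M)=\chi(M^T)$ disposes of the counting once finiteness is known; the identification $T_p(M^T)=(T_pM)^T$ together with $(\mathfrak{g}/\mathfrak{g}_p)^T=\mathfrak{t}/\mathfrak{t}=0$ correctly reduces everything to the slice representation; and the finite-to-one property of $(\nu_p)^T\to\nu_p/G_p$ (via conjugacy of maximal tori inside $(G_p)_v$) is correct. But the codimension estimate that you yourself call ``the delicate point'' is not a step that can be deferred: it \emph{is} the theorem. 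The weight-space heuristic cannot deliver it, because counting ``switched-off'' weight directions bounds $\dim(\nu_p)^T$ against $\dim\nu_p$, whereas what you need is a bound against the cohomogeneity $\dim(M/G)=\dim\nu_p-\dim G_p+\dim G_{princ}$; converting the one into the other is precisely the nontrivial fact that representations of compact groups have non-positive homogeneity rank. A transparent way to see the equivalence: if $0\neq v\in(\nu_p)^T$, the slice representation of $(G_p)_v$ at $v$ splits off the trivial line $\mathbb{R}v$, and the complementary summand has the same principal isotropy and the same rank jump but cohomogeneity one less, hence homogeneity rank $hrk(M,G)+1$; when $hrk(M,G)\geq 0$ this contradicts that non-positivity. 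Without this input, or an equivalent one, your proof has no engine.

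Second, your last paragraph silently replaces the stated hypothesis $hrk(M,G)\leq 0$ by $hrk(M,G)=0$ (``the toric regime''); the inequality $\dim M^T\leq -hrk(M,G)$ that you derive says nothing when $hrk(M,G)<0$. This gap is not fillable, because the statement as quoted is false for $hrk<0$: let $SO(3)$ act on $S^4\subset\mathbb{R}^3\oplus\mathbb{R}^2$ through the $\mathbb{R}^3$ factor. The principal isotropy is $SO(2)$, so $hrk=1-1-4+3-1=-2\leq 0$, yet $M^T\cong S^2$ is infinite (while, consistently, $\chi(S^4)=2=\chi(S^2)$: only finiteness fails). The viable hypothesis is $hrk(M,G)\geq 0$, equivalently $hrk(M,G)=0$, since a nonempty $M^T$ already forces $hrk\leq 0$ through the slice representation; this is exactly the case the paper uses, as quaternionic toric manifolds have $hrk=0$. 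A sound write-up should flag this discrepancy in the quoted statement instead of eliding it. (A minor further point: your justification of $\chi(M)=\chi(M^{S^1})$ via a nowhere-zero vector field on the complement is loose, since every open manifold carries such a field; use instead the Lefschetz fixed point theorem for a generic element of $T$, or additivity of the compactly supported Euler characteristic together with local freeness of the action off the fixed set.)
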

\noindent Note that for quaternionic toric manifolds the homogeneity rank is $0$, thus the fixed point set $M^T$ is finite and $\#M^T=\chi(M)$.
\begin{remark} The maximal tori theorem states that, in a compact Lie group, all maximal tori are conjugate; so the cardinality of the set $M^T,$ when finite, does not depend
on the chosen torus $T$.
\end{remark}
\noindent In  particular for quaternionic  toric manifolds,  endowed with a strongly non degenerate $4$-plectic form, the cardinality of $A$ is bounded above an below respectively by the cardinality of the set $M^T$ of  points in $M$  fixed by a maximal torus $T\subset Sp(1)^m$, and the cardinality of the set $M^{\Sp(1)^m}$ of points fixed by  $\Sp(1)^m$.\\
 We have therefore proved the following
\begin{proposition} If $(M,\psi)$ is a quaternionic toric manifold, and $\psi$ is strongly non degenerate, the cardinality $\#A$ of $A$ satisfies the inequalities $$\#M^{\Sp(1)^m}\leq\#A\leq\#M^{T}.$$
\end{proposition}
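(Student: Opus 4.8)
The plan is to reduce both inequalities to a count of the finite set $A=\s\bigl(\cap_{i=1}^m C_i\bigr)$. Recall from the discussion above that a point is critical for every component $\s_i$ precisely when its isotropy has maximal rank, i.e. contains a maximal torus; since all maximal tori of $\Sp(1)^m$ are conjugate, $\cap_{i=1}^m C_i$ is exactly the $\Sp(1)^m$-saturation of $M^T$. First I would use the $\Sp(1)^m$-invariance of $\s$ to collapse this saturation and obtain $A=\s(M^T)$. By the quoted theorem on the homogeneity rank, $M^T$ is finite (with $\#M^T=\chi(M)$), so $A$ is finite and all three quantities in the statement make sense.

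The upper bound is then immediate: $A=\s(M^T)$ is the image under a map of the finite set $M^T$, hence $\#A\le\#M^T$.

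For the lower bound I would start from the inclusions $M^{\Sp(1)^m}\subseteq M^T\subseteq\cap_{i=1}^m C_i$, which give $\s(M^{\Sp(1)^m})\subseteq A$ and hence $\#\s(M^{\Sp(1)^m})\le\#A$. It remains to show that $\s$ is injective on $M^{\Sp(1)^m}$. To this end I would apply the slice theorem at a full-group fixed point $p$: effectiveness together with the discrete principal isotropy forces the linear isotropy action on $T_pM\cong\H^m$ to be the standard one, so by Proposition \ref{moment} the tri-moment map reads $\s=\s(p)-\frac{1}{4}(|q_1|^4,\ldots,|q_m|^4)$ in linearizing coordinates. Thus each $p\in M^{\Sp(1)^m}$ is a strict simultaneous local maximum of all the $\s_i$, so $\s(p)$ is a corner of $\s(M)$, and by Theorem \ref{convexity} it is a vertex of ${\rm Conv}(A)$. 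Showing that distinct such $p$ yield distinct vertices then gives $\#M^{\Sp(1)^m}=\#\s(M^{\Sp(1)^m})\le\#A$.

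The delicate step, and the main obstacle, is exactly this injectivity of $\s$ on $M^{\Sp(1)^m}$. The local normal form only produces strict local maxima and therefore cannot by itself rule out two distinct full-group fixed points meeting at the same corner of ${\rm Conv}(A)$ with a common tri-moment value. Excluding this coincidence is where the global convexity supplied by Theorem \ref{convexity}, and hence the strong non-degeneracy of $\psi$, is genuinely needed; turning the correspondence ``fixed point $\mapsto$ vertex'' into an injection, rather than merely a surjection onto the vertex set, is the heart of the argument.
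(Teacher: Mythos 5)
Your set-up and your proof of the upper bound coincide with the paper's own argument: the paper likewise characterizes $\cap_{i=1}^m C_i$ as the set of points whose isotropy contains a maximal torus, uses conjugacy of maximal tori together with the $\Sp(1)^m$-invariance of $\s$ to identify $A=\s(M^T)$, and invokes the homogeneity-rank theorem of Bredon (the rank being $0$ for quaternionic toric manifolds) to conclude that $M^T$ is finite with $\#M^T=\chi(M)$; hence $\#A\le\#M^T$. Up to that point your argument is correct and is essentially the argument in the paper.

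The lower bound is where your proposal stops, and you say so yourself: from $M^{\Sp(1)^m}\subseteq\cap_{i=1}^m C_i$ one only gets $\#\s\bigl(M^{\Sp(1)^m}\bigr)\le\#A$, and promoting this to $\#M^{\Sp(1)^m}\le\#A$ requires $\s$ to be injective on $M^{\Sp(1)^m}$. Be aware that the paper does not supply this step either: it passes directly from the facts recalled above to the proposition (``we have therefore proved''), so the injectivity you isolate is left implicit there rather than settled by an argument you failed to find. That said, the repair you sketch would not work as written, for two reasons internal to the paper. First, the slice theorem linearizes the action but not the form: there is no Darboux theorem for $4$-plectic forms (the paper stresses exactly this at the beginning of Section \ref{convex}), so in linearizing coordinates $\psi$ need not be $\psi_0$, the formula of Proposition \ref{moment} cannot be transplanted, and in particular ``strict simultaneous local maximum'' does not follow --- even the signs of the $\s_i$ near $p$ are not determined. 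Second, Theorem \ref{convexity} is not available under the hypotheses of this proposition: it requires $\psi$ to be partitioned strongly non degenerate together with the algebraic condition $v_4^j(g_p(v_4^k,v_4^k))=0$, whereas here only strong non-degeneracy is assumed. So the accurate summary is: your proof matches the paper where the paper is explicit; the injectivity of $\s$ on the full fixed-point set is a genuine gap in your write-up, one the paper itself glosses over; and the route you propose to close it (linearization plus the convexity theorem) needs hypotheses that the proposition does not grant.
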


\section{From polytopes to manifolds}
\label{poli}
The introduction of the  tri-moment map for the multiplicative action of $Sp(1)^n$ on $\H^n$ can be used  to construct, starting from appropriate  Delzant polytopes in ${\R^m}^*$,  real manifolds of dimension $4m$  acted on by  $Sp(1)^m$ with trivial principal isotropy. \\
We recall that a Delzant polytope has rationality, simplicity and regularity properties \cite{Del}. Our procedure is  inspired by the Delzant construction  that associates  a symplectic manifold with a polytope.
We recall the following 
\begin{definition} A Delzant polytope $P$ in ${\R^m}$ is a convex polytope such that
\begin{enumerate}
\item $P$ is {\em simple}, i.e., there are $n$ edges meeting at each vertex;
\item $P$ is {\em rational}, i.e., the edges meeting at a vertex $p$ are rational in the sense that each edge is of the form $p+tu_i,t\geq 0$ , where $u_i \in \Z^m$;
\item $P$ is {\em smooth}, i.e., for each vertex, the corresponding ${u_1, \ldots,u_m}$  can be chosen to be a $\Z$-basis of $\Z^m$.
\end{enumerate}
\end{definition}
\noindent In what follows, we will always consider polytopes of the dual space ${\R^m}^*$. Let $P\subset {\R^m}^*$ be a Delzant polytope with $d$ {\em facets}, i.e. $(m-1)$-dimensional faces. Let $v_i\in \Z^m$ with $i=1,\ldots ,d$ be the primitive outward-pointing normal vectors to the facets. For some $\lambda_i\in \R$ we can write
$$P=\{x\in {\R^m}^*|<x,v_i>\leq \lambda_i,\;i=1,\ldots,d\}.$$
Let $\{e_1,\ldots,e_d\}$ be the standard basis of $\R^d$. Consider the map $\pi:\R^d\rightarrow\R^m$ defined by $e_i\mapsto v_i$. By Lemma 2.5.1 in \cite{Sil} we know that the map $\pi$ is onto and maps $\Z^d$ onto $\Z^m$. Therefore $\pi$ induces  a surjective map, still called $\pi$, between tori \begin{equation}\label{tori}\pi:\T^d\rightarrow \T^m.\end{equation} The kernel  $N$ of $\pi$ is a $(d-m)$-dimensional Lie subgroup of $\T^d$, with inclusion map $i:N\rightarrow\T^d$.  The exact sequence of tori
$$1\rightarrow N\xrightarrow{i} \T^d\xrightarrow{\pi}\T^m\rightarrow 1$$ induces an exact sequence at the Lie algebra level
$$0\rightarrow \mathfrak{n}\xrightarrow{i} \R^d\xrightarrow{\pi}\R^m\rightarrow 0$$ with dual exact sequence
$$0\rightarrow {(\R^m)}^*\xrightarrow{\pi^*}{(\R^d)^*}\xrightarrow{i^*} \mathfrak{n}^*\rightarrow 0.$$\\
Now consider $\H^d$ with the $4$-plectic form $\psi_0$ and the standard generalized Hamiltonian action of $Sp(1)^d$. In Propsition \ref{moment} we have computed  the tri-moment map $\s$ for this action.
The subtorus $N={(S^1)}^{d-m}$ acts on $\mathbb{H}^d$. Our procedure works whenever the action of $N$ on $\mathbb{H}^d$ extends to $\widehat{N}=Sp(1)^{d-m}\cong(S^3)^{d-m}$. 
\subsection{Extendibility of the action of $N$} 
Due to the non-commutativity of  quaternions,  it is not always possible to extend the action of the subtorus $N$. In fact the only way to define a multiplicative action of $Sp(1)^{d-m}$ on $\mathbb{H}^d$ is the following
$$(h_1,\ldots,h_{d-m})(q_1,\ldots,q_d)=(h_{k_1}^{\alpha_1}q_1h_{j_1}^{\beta_1}, \ldots,h_{k_d}^{\alpha_d}q_dh_{j_d}^{\beta_d})$$
with $k_\ell,j_\ell \in \{1,\ldots, d-m \}$ and $\alpha_\ell\in \{0,1\} ,\ \beta_\ell \in \{0,-1\}$.

We  want to collect some necessary and sufficient conditions under which the action of $N$ can be extended to an action of $\widehat{N}$ on $\mathbb{H}^{d-m}$. We state these conditions in terms of the basis of the Lie algebra $\mathfrak{n}$ of $N$. 

\begin{prop}\label{Adm}
A necessary condition for the action of $N$ to be extendable to $\widehat N$ is that there exists a basis $\mathcal{B}=\{b_1,\ldots,b_{d-m}\}$ of $\mathfrak{n}\subset \R^d$ such that the $d\times (d-m)$ matrix $A_\mathcal{B}=(b_1|\ldots|b_{d-m}),$ whose columns are $b_1,\ldots,b_{d-m},$  has the following properties:

\begin{enumerate}
\item its only possible entries are $1,$$-1$ and $0$;
\item in each of its rows at most two entries are not zero.
\end{enumerate}
In this case we say that $\mathfrak{n}$ admits a {\em reduced} basis.
\end{prop}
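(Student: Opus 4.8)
The plan is to argue constructively: assuming the action of $N$ does extend to $\widehat{N}=\Sp(1)^{d-m}$, I would produce a reduced basis of $\mathfrak{n}$ by reading it off from the extended action. By the discussion preceding the statement, any multiplicative $\Sp(1)^{d-m}$-action on $\H^d$ has the form
\[
(h_1,\ldots,h_{d-m})(q_1,\ldots,q_d)=(h_{k_1}^{\alpha_1}q_1 h_{j_1}^{\beta_1},\ldots,h_{k_d}^{\alpha_d}q_d h_{j_d}^{\beta_d}),
\]
with $k_\ell,j_\ell\in\{1,\ldots,d-m\}$, $\alpha_\ell\in\{0,1\}$ and $\beta_\ell\in\{0,-1\}$; thus the whole content of an extension is the combinatorial datum $(k_\ell,\alpha_\ell,j_\ell,\beta_\ell)_{\ell=1}^d$. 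Since $N\cong(S^1)^{d-m}$ has rank $d-m$, the inclusion $N\hookrightarrow\widehat{N}$ realizes $N$ as a maximal torus of $\widehat{N}$, and the restriction of the $\widehat{N}$-action to this torus must reproduce the given $N$-action on $\H^d$ determined by the inclusion $i\colon\mathfrak{n}\hookrightarrow\R^d$.

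First I would restrict the displayed formula to the maximal torus $h_r=\exp(\theta_r\bi)$ of $\widehat{N}$ and differentiate at $\theta=0$ to obtain the fundamental vector field of the $r$-th circle. Writing each quaternionic coordinate as $q_\ell=z_\ell+w_\ell\bj$ with $z_\ell,w_\ell\in\C$ and using $\bj\,e^{\bi t}=e^{-\bi t}\bj$, the $r$-th generator rotates the complex part $z_\ell$ by $\alpha_\ell$ when $k_\ell=r$, by $\beta_\ell$ when $j_\ell=r$ (and by their sum when $k_\ell=j_\ell=r$). Comparing these weights with the inclusion $i$ that records the weights of the $N$-action on $\H^d$ identifies the image $b_r\in\mathfrak{n}$ of the $r$-th standard generator of the Lie algebra $\mathfrak{t}$ of the maximal torus of $\widehat{N}$: it is the vector whose $\ell$-th entry equals $\alpha_\ell$ if $k_\ell=r$, equals $\beta_\ell$ if $j_\ell=r$, and vanishes otherwise. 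Because $b_1,\ldots,b_{d-m}$ are the images of a basis of $\mathfrak{t}$ under the Lie algebra isomorphism $\mathfrak{t}\cong\mathfrak{n}$ induced by $N=T_{\widehat{N}}$, they form a basis $\mathcal{B}$ of $\mathfrak{n}$, and I set $A_{\mathcal{B}}=(b_1|\cdots|b_{d-m})$.

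It then remains to verify the two asserted properties, which are immediate from the description of the entries. Since $\alpha_\ell\in\{0,1\}$ and $\beta_\ell\in\{0,-1\}$, each entry of $A_{\mathcal{B}}$ lies in $\{0,\pm1\}$, including the degenerate case $k_\ell=j_\ell$ where it equals $\alpha_\ell+\beta_\ell\in\{-1,0,1\}$; this is property (1). Moreover, in the $\ell$-th row the only columns that can be nonzero are $r=k_\ell$ and $r=j_\ell$, so at most two entries are nonzero, which is property (2). The step I expect to be the main obstacle is the identification of the $b_r$: one must track the non-commutativity of $\H$ carefully, since left and right multiplication by a unit complex number act with opposite weights on the two complex components $z_\ell$ and $w_\ell$ of $q_\ell$. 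This asymmetry is precisely what pins down the signs carried by $\alpha_\ell$ and $\beta_\ell$, and it is also the point where one must check that the resulting $b_r$ are integral and genuinely span $\mathfrak{n}$ rather than a proper sublattice.
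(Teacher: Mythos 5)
Your argument is correct in substance but runs in the opposite logical direction from the paper's. The paper proves the contrapositive: if no reduced basis exists, then every basis of $\mathfrak{n}$ produces a defining relation of the form $x_i=\alpha x_j+\beta x_k+\gamma x_l$ (three or more terms) or one with a coefficient outside $\{0,\pm1\}$, and the corresponding quaternionic relation $h_i=h_j^{\alpha}h_k^{\beta}h_l^{\gamma}$ cannot be realized multiplicatively, since non-unit powers are not multiplicative and two non-commuting factors would have to sit on the same side of $q_i$. You instead argue directly: given an extension in the normal form displayed before the statement, you read off the candidate basis vectors $b_r$ from the placement data $(k_\ell,\alpha_\ell,j_\ell,\beta_\ell)$, and then properties (1) and (2) are immediate. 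Both arguments rest on the same key input, namely the classification of multiplicative $\Sp(1)^{d-m}$-actions on $\H^d$, but yours is constructive and makes explicit the dictionary (left factor $\leftrightarrow$ entry $+1$, right inverse factor $\leftrightarrow$ entry $-1$) that the paper uses only implicitly; this buys a cleaner proof and, as a by-product, exhibits the basis attached to a given extension, whereas the paper's version more directly displays the obstruction.

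One caveat: your premise that the restriction of the $\widehat N$-action to its maximal torus ``must reproduce the given $N$-action on $\H^d$'' is too strong if read literally. Whenever a right factor genuinely occurs, $q_\ell\mapsto q_\ell h^{-1}$ restricts on the torus to $z_\ell+w_\ell\bj\mapsto z_\ell e^{-\bi t}+w_\ell e^{\bi t}\bj$, which is not left multiplication by any unit complex number; literal equality on all of $\H^d$ would force every $\beta_\ell=0$, excluding the paper's own examples (the trapezoid of Example \ref{trapezoid}, Example \ref{Ex}). The comparison can only be made on the complex parts $z_\ell$, and even there only up to replacing some $b_r$ by $-b_r$, i.e.\ up to reparametrizing the circles: in Example \ref{Ex} the displayed action restricts on the second coordinate to $z_2\mapsto e^{-\bi t_2}z_2$, while the stated basis vector has second entry $+1$. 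Since negating basis vectors preserves properties (1) and (2), this does not damage your conclusion --- your computation in fact only uses the $z_\ell$-weights, and you flag the asymmetry yourself --- but the identification of the $b_r$ should be phrased as agreement on $\C^d\subset\H^d$ up to sign, which is also the level of precision at which the paper itself operates.
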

\begin{proof}
Suppose that $d-m\ge 3$ and that  there  exists no basis of $\mathfrak{n}$ satisfying (2). Then for any $\mathcal{B}$ there exists at least one  row of $A_\mathcal{B}$ that has (at least) $3$ entries which are non-vanishing. 
 This means that  one of the defining equations of $\mathfrak{n}$  is of the form $x_i=\alpha x_{j}+\beta x_{k}+\gamma x_{l}$. In terms of $\widehat N$ this becomes a multiplicative relation involving $4$ unitary quaternions $h_i=h_{j}^{\alpha}h_{k}^{\beta}h_{l}^{\gamma}$, and  this equation does not allow to define a multiplicative action of $Sp(1)^{d-m}$ on $\HH^d$ since at least two (not commuting) factors have to be placed on the same side of $q_i$.\\
Suppose now that for any basis satisfying (2)  condition (1) is not fulfilled.   This means that  one of the defining equations of $\mathfrak{n}$  is of the form $x_i=\alpha x_{j}+\beta x_{k}$ with $\alpha$ or $\beta$ different from $0,1,-1$.
As before we have a relation between elements of $\widehat{N}$ of the form  $h_i=h_{j}^{\alpha}h_{k}^{\beta}$ which does not define a multiplicative action.

 \end{proof}
 \noindent A basis satisfying condition (1)  exists if and only if the starting polytope facets form angles   
 which are multiple of $\frac{\pi}{4}$, otherwise entries different from $1$ or $-1$ necessarily occur.  \\
The necessary condition  given in the previous proposition is not sufficient, indeed
\begin{example} Assume that $m=2$, $d=4$ and consider the polytope whose normals  are $\{(-1,0),(0,-1),(1,1,),(-1,1)\}$. The matrix $A_{\mathcal{B}}$ is given by  a $4\times 2$ matrix  whose  columns are $(1,1,1,0)$ and $(-1,1,0,1)$  and correspondingly $(h_1,h_2)(q_1,q_2,\cdot,\cdot)=(h_1q_1h_2,h_1q_2h_2^{-1},\cdot,\cdot)$ which does not define an action of $Sp(1)^2$ on $\mathbb{H}^4$. 
\end{example}
\noindent The previous fact is general, indeed by direct computation we can prove
\begin{theorem} \label{adm} Let $P$ be a Delzant polytope in ${\R^m}^*$ with $d$ facets, let  $\pi: T^d\rightarrow T^m$   be  defined as in (\ref{tori})  and  let $N=\ker \pi$.
The action of $N$ can be extended to an action of $\widehat{N}$ on $\mathbb{H}^{d}$ if  and only if one can find a reduced basis $\mathcal{B}$ of the Lie algebra $\mathfrak{n}$ of $N$ in such a way that the matrix $A_\mathcal{B}$ does not contain a sub matrix of the form 
$$
\mathcal{M}=\left(\begin{array}{cc}
1&-1\\
1&1\\
\end{array}
\right).
$$
\end{theorem}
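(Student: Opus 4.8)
The plan is to reduce the statement to a purely combinatorial assertion about the $\pm1$--pattern of $A_\mathcal B$ and then to read off the obstruction. By Proposition~\ref{Adm}, if the action extends then $\mathfrak n$ admits a reduced basis, and every multiplicative $\widehat N$--action on $\HH^d$ has the form $q_\ell \mapsto h_{k_\ell}^{\alpha_\ell} q_\ell h_{j_\ell}^{\beta_\ell}$. I read the columns of $A_\mathcal B$ as the generators $h_1,\dots,h_{d-m}$ and its rows as the factors $q_1,\dots,q_d$, the entry $(A_\mathcal B)_{\ell s}$ recording whether $h_s$ enters the $\ell$-th factor on the left (sign $+$) or on the right (sign $-$), the value $0$ meaning that $h_s$ does not act on $q_\ell$. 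Since the admissible form leaves each factor \emph{at most one} left slot and \emph{at most one} right slot, the first point to establish is the criterion: \emph{a fixed reduced basis $\mathcal B$ produces a genuine $\widehat N$--action if and only if every row of $A_\mathcal B$ contains at most one $+1$ and at most one $-1$}, that is, no row equals $(1,1)$ or $(-1,-1)$ up to zeros. Here I also use that replacing $h_s$ by $h_s^{-1}$ negates the $s$-th column and interchanges its left/right role, so the freedom available is exactly: change of reduced basis together with sign changes of the generators.

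Granting this criterion, the forward implication is immediate. If the $N$--action extends, the reduced basis realizing it has, by the criterion, no two--equal--sign row; in particular it contains no $2\times 2$ submatrix equal to $\mathcal M$, whose second row is $(1,1)$. Hence a reduced basis with no $\mathcal M$ exists, as required.

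For the converse I would start from a reduced basis with no $\mathcal M$ and manufacture, by sign changes of the generators alone, a reduced basis meeting the criterion; the admissible form then defines the action. This is a $2$--colouring problem: choosing signs $\e_s\in\{\pm1\}$ for the columns, a two--entry row with entries $c_p,c_q$ at columns $p,q$ acquires opposite signs precisely when $\e_p\e_q=-c_pc_q$. One is thus led to a signed graph on the generators, with one edge per two--entry row, and the action exists as soon as this graph is balanced. The shortest frustration is a pair of rows supported on the same two columns $\{p,q\}$ with $c_pc_q$ of opposite signs: these impose $\e_p\e_q=+1$ and $\e_p\e_q=-1$ simultaneously, and, after reordering the two columns and changing their signs, such a pair is exactly the submatrix $\mathcal M$. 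This is the same mechanism that makes the action in the worked Example fail, where the two factors $q_1,q_2$ force $h_2$ onto the same side with incompatible placement. Thus the absence of $\mathcal M$ removes every pairwise contradiction.

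The main obstacle is to show that these pairwise contradictions are the \emph{only} ones, i.e.\ that for $\mathfrak n=\ker\pi$ the signed graph above is balanced as soon as no $\mathcal M$ occurs. This is the step that genuinely uses the structure of $\mathfrak n$ together with reducedness, and where the ``direct computation'' enters: reducedness severely restricts the admissible column operations (in the Example, on the two rows forming $\mathcal M$ the only reduced columns are $\pm b_1,\pm b_2$), and one checks that any longer frustrated cycle can be brought, by an admissible reduced change of basis, to a $2\times 2$ minor and hence exhibits an $\mathcal M$. Once this localisation is carried out the signed system is solvable, the resulting generator signs yield an admissible action extending that of $N$, and the equivalence follows.
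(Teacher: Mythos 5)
The paper itself offers no argument for this theorem (it is prefaced only by ``by direct computation we can prove''), so your proposal has to be judged against the statement itself rather than against a proof in the paper. The first half of your plan is sound: encoding, for each generator, the choice of side by a sign $\varepsilon_s\in\{\pm1\}$ attached to the $s$-th column (the legitimate operation here is replacing $b_s$ by $-b_s$, a change of reduced basis, rather than ``replacing $h_s$ by $h_s^{-1}$'', which is not a group automorphism of $Sp(1)$, but the effect on the matrix is the same), a two-entry row with entries $c_p,c_q$ is realizable exactly when $\varepsilon_p\varepsilon_q=-c_pc_q$, so extendability for a given reduced basis is the balance of a signed graph; frustrated pairs of parallel edges are, up to row and column permutations and column negations, exactly $\mathcal M$-submatrices, and the forward implication follows. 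The genuine gap is the converse, precisely where you locate it: the mechanism you propose for closing it, namely that ``any longer frustrated cycle can be brought, by an admissible reduced change of basis, to a $2\times 2$ minor and hence exhibits an $\mathcal M$'', is false as a statement about reduced bases of kernels of lattice surjections, so it cannot be the missing step.

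Here is a concrete obstruction. Take $d=6$, $m=3$ and $\pi(e_i)=v_i$ with $v_1=e_1$, $v_2=e_2$, $v_3=e_3$, $v_4=-e_1-e_2$, $v_5=-e_2-e_3$, $v_6=-e_1-e_3$. Then $\mathfrak{n}=\ker\pi$ is spanned by $b_1=(1,0,0,1,0,1)$, $b_2=(0,1,0,1,1,0)$, $b_3=(0,0,1,0,1,1)$. The only vectors of $\mathfrak{n}\cap\Z^6$ with entries in $\{0,\pm1\}$ are $\pm b_i$ and $\pm(b_i-b_j)$, and one checks that every basis containing a difference $b_i-b_j$ produces a row with three nonzero entries; hence the reduced bases are exactly $\{\pm b_1,\pm b_2,\pm b_3\}$. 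For all of them, rows $4,5,6$ are, up to signs, $(1,1,0)$, $(0,1,1)$, $(1,0,1)$: any two of these rows are simultaneously nonzero in only one column, and rows $1,2,3$ have a single nonzero entry, so $A_{\mathcal B}$ contains no $2\times2$ submatrix with four nonzero entries at all, in particular no $\mathcal M$. Yet the balance conditions read $\varepsilon_1\varepsilon_2=\varepsilon_2\varepsilon_3=\varepsilon_1\varepsilon_3=-1$, whose product is $-1$: a frustrated $3$-cycle. Equivalently, coordinates $4,5,6$ force $h_1,h_2$ on opposite sides of $q_4$, $h_2,h_3$ on opposite sides of $q_5$, and $h_1,h_3$ on opposite sides of $q_6$, which is a $2$-coloring of a triangle and hence impossible: the action of $N$ does not extend, although no reduced basis shows an $\mathcal M$. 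So in your framework ``no $\mathcal M$'' simply does not imply balance, and no reduced change of basis localizes the frustration.

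What rescues the theorem --- and what any correct proof must therefore use --- is the Delzant hypothesis itself: the six vectors above are not the facet normals of any Delzant polytope. Indeed, in a complete smooth fan containing these rays, pass to $\Z^3/\Z v_4\cong\Z^2$: the rays $v_5,v_6$ project to $(1,-1)$ and $(-1,-1)$, which span a cone of determinant $-2$ with no available ray between them, while covering the direction $(0,-1)$ requires one of them; hence exactly one of the two-dimensional cones $(v_4,v_5)$, $(v_4,v_6)$ belongs to the fan, and by the evident symmetry the same holds at $v_5$ and at $v_6$. Then every vertex of the graph on $\{v_4,v_5,v_6\}$ formed by these cones would have degree exactly one, which is impossible by parity. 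So higher frustrated cycles are excluded not by reducedness or by the kernel structure --- the only inputs your argument uses --- but by smoothness of the polytope at its vertices; since your proposal never invokes that hypothesis, its crucial step is not merely unproven but unprovable as formulated.
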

\noindent Theorem \ref{adm} applies  in particular to polytopes obtained by cutting the standard  simplex by means of hyperplanes parallel to the facets of the simplex, but these do not cover all the polytopes for which the action of $N$ can be extended, indeed  
\begin{example} \label{Ex} Consider the polytope in  ${\R^3}^*$ whose primitive outward-pointing normals  to the facets are $$\{(0,-1,0),(0,0,-1),(0,1,0),(-1,0,0),(1,1,0),(1,1,1)\}.$$ In this case,   the vector  $(1,1,0)$  is not orthogonal to any facet of the simplex, but a reduced basis for $N$ given by $$\mathcal{B}=\{(1,0,1,0,0,0), (0,1,0,0,-1,1),(1,0,0,1,1,0)\}$$  is such that $A_{\mathcal{B}}$ does not contain the matrix $\mathcal{M}$. Hence, thanks to Theorem \ref{adm},  the action of $N$ can be extended. For any $n=(h_1,h_2,h_3)\in \widehat{N}$ and $q=(q_1,\ldots,q_6)$
\begin{equation}\label{esempio}n\cdot q:=(h_1q_1h_3^{-1},q_2h_2^{-1},h_1q_3,q_4h_3^{-1},h_2q_5h_3^{-1},q_6h_2^{-1}).
\end{equation}
\end{example}

\subsection{Towards the construction of the manifold} 
Let  $P$ be the Delzant polytope defined by$$P=\{x\in {\R^m}^*|<x,v_i>\leq \lambda_i,\;i=1,\ldots,d\}.$$
From now on we will assume that the action of $N$ can be extended to $\widehat {N}$. The tri-moment map for the action of $\widehat{N}\cong Sp(1)^{(d-m)}$ on $\H^d$ is given by  $i^*\circ\s$ where  $\s(q_1,q_2,\ldots,q_d)=-\frac{1}{4}(|q_1|^4,|q_2|^4,\ldots,|q_d|^4)+C,$
 for some $C=(C_1,\ldots,C_d),$ is the tri-moment map for the standard action of $\Sp(1)^d$ on $\HH^d$  and   $$i^*:{\R^{d}}^*\cong((\Lambda^3{\mathfrak{sp}(1))^d)}^*\rightarrow((\Lambda^3{\mathfrak{sp}(1))^{d-m}})^{*}\cong{\R^{(d-m)}}^* $$ is the dual of the inclusion  map. Now we choose the constant $C$ to be $(\lambda_1,\ldots,\lambda_d).$
\begin{lemma}  Let $Z= (i^*\circ\s)^{-1}(0)$. $Z$ is compact and, if we assume that the action of $N$  extends to an action of ${\widehat N}$, then the action of $\widehat N$ on $Z$ is free. 
\end{lemma}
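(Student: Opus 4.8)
The plan is to translate the condition $q\in Z$ into an explicit description via the dual exact sequence, read off compactness from it, and then reduce the freeness of the (non-abelian) $\widehat N$-action to a purely linear-algebraic rank condition guaranteed by the simplicity of the Delzant polytope.

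First I would identify $Z$ concretely. Exactness of $0\to(\R^m)^*\xrightarrow{\pi^*}(\R^d)^*\xrightarrow{i^*}\mathfrak{n}^*\to0$ gives $\ker i^*=\operatorname{im}\pi^*$, so $q\in Z=(i^*\circ\s)^{-1}(0)$ exactly when $\s(q)\in\operatorname{im}\pi^*$, i.e. when $\pi^*x=\s(q)$ for some $x\in(\R^m)^*$. Writing this componentwise via $(\pi^*x)_i=\langle x,v_i\rangle$ and $\s(q)_i=\lambda_i-\tfrac14|q_i|^4$ yields $\langle x,v_i\rangle=\lambda_i-\tfrac14|q_i|^4\le\lambda_i$ for every $i$; since $\pi^*$ is injective this $x=x(q)$ is unique and lies in $P$. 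Thus $q\mapsto x(q)$ is a continuous map $Z\to P$, and from $|q_i|^4=4\bigl(\lambda_i-\langle x(q),v_i\rangle\bigr)$ together with the compactness of $P$ I get that each $|q_i|$ is bounded on $Z$. As $Z$ is also closed (a level set of a continuous map), it is a closed bounded subset of $\H^d\cong\R^{4d}$, hence compact. Equivalently, $\Phi(q)=\bigl(\lambda_i-\tfrac14|q_i|^4\bigr)_i$ is proper and $Z=\Phi^{-1}(\pi^*P)$ is the preimage of a compact set.

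For freeness I would fix $q\in Z$, set $x=x(q)$ and $I=\{i:q_i=0\}$, which is precisely the set of facets whose closure contains $x$. Once the action extends, it has the form $n\cdot q=\bigl(h_{k_i}^{\alpha_i}q_ih_{j_i}^{\beta_i}\bigr)_i$ with $\alpha_i\in\{0,1\}$, $\beta_i\in\{0,-1\}$, whose incidence pattern is the reduced-basis matrix $A_{\mathcal{B}}$ (entries in $\{0,\pm1\}$, at most two per row, and in a two-entry row the $+1$ acting on the left, the $-1$ on the right --- exactly the configuration the criterion of Theorem \ref{adm} ensures). An element $n=(h_1,\dots,h_{d-m})$ stabilizes $q$ iff $h_{k_i}^{\alpha_i}q_ih_{j_i}^{\beta_i}=q_i$ for every $i\notin I$, the equations for $i\in I$ being trivial. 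Since $q_i\ne0$ is invertible, a single-entry row forces $h_\ell=1$, while a two-entry row forces $h_\ell=q_ih_{\ell'}q_i^{-1}$, that is, $h_\ell$ and $h_{\ell'}$ are conjugate in $\Sp(1)$.

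The hard part is that, unlike the commutative symplectic case, I cannot read off the stabilizer as an intersection of subtori. The device that resolves this is to pass from the quaternions $h_\ell$ to their conjugacy-class angles $\theta_\ell=\arccos(\Re h_\ell)\in[0,\pi]$: conjugation preserves the real part, so each two-entry relation becomes the scalar equation $\theta_\ell=\theta_{\ell'}$ and each single-entry relation becomes $\theta_\ell=0$. These are precisely the equations $A^{I^c}\theta=0$, where $A^{I^c}$ is the submatrix of $A_{\mathcal{B}}$ on the rows $i\notin I$ (with $I^c$ the complement of $I$). Now the simplicity of $P$ makes $\{v_i\}_{i\in I}$ linearly independent, hence $\mathfrak{n}\cap\operatorname{span}\{e_i:i\in I\}=0$, which is equivalent to $\ker A^{I^c}=0$, i.e. $A^{I^c}$ has full column rank $d-m$. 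Therefore $\theta=0$, so every $h_\ell=1$ and $\operatorname{Stab}_{\widehat N}(q)=\{1\}$; as $q$ was arbitrary, the $\widehat N$-action on $Z$ is free. The same rank computation applied to the fundamental vector fields (where a two-entry row gives $\xi_\ell=q_i\xi_{\ell'}q_i^{-1}$, hence $|\xi_\ell|=|\xi_{\ell'}|$) shows the isotropy Lie algebras vanish; this is a useful consistency check, though it is subsumed by the freeness just established.
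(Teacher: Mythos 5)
Your compactness argument is correct and is essentially the paper's own: both identify $\sigma(Z)$ with $\pi^*(P)$ through the dual exact sequence and read off boundedness from the compactness of $P$. The freeness argument, however, has a genuine gap, located at the sentence asserting that the stabilizer equations ``are precisely the equations $A^{I^c}\theta=0$.'' Passing to real parts is sign-blind: for unit quaternions $\Re(h^{-1})=\Re(h)$, so a two-entry row always yields the unsigned relation $\theta_{k_i}=\theta_{j_i}$, regardless of whether the two entries of that row of $A_{\mathcal{B}}$ are $(+1,-1)$ or $(+1,+1)$ and of which factor multiplies on which side. The signed system $A^{I^c}\theta=0$ is a genuinely different system unless every two-entry row has opposite-signed entries, and your premise that Theorem \ref{adm} guarantees this configuration is false: in the paper's own Example \ref{Ex} the row of $A_{\mathcal{B}}$ attached to $q_1$ is $(1,0,1)$, with two entries $+1$, realized by the action $h_1q_1h_3^{-1}$. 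For that row your linear equation reads $\theta_1+\theta_3=0$, while the stabilizer only gives $\theta_1=\theta_3$; the implication ``stabilizer $\Rightarrow A^{I^c}\theta=0$'' therefore fails, and full column rank of $A^{I^c}$ no longer forces $\theta=0$.

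This is not repairable by linear algebra alone, because the condition you establish ($\ker A^{I^c}=0$, which does follow from simplicity of $P$, as you say) and the condition you actually need (the unsigned system has only the zero solution; equivalently, every connected component of the graph of couples over $i\notin I$ contains an index acted on by simple multiplication) are inequivalent. For instance, the pattern with rows $(1,1,0)$, $(0,1,-1)$, $(1,0,-1)$ is admissible for Theorem \ref{adm} (no two rows occupy the same pair of columns) and has trivial kernel, yet for an extension such as $h_1q_1h_2^{-1}$, $h_2q_2h_3^{-1}$, $h_1q_3h_3^{-1}$ the stabilizer equations $h_1=q_1h_2q_1^{-1}$, $h_2=q_2h_3q_2^{-1}$, $h_1=q_3h_3q_3^{-1}$ admit a whole circle of solutions, namely all $h_3$ commuting with $q_3^{-1}q_1q_2$. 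Ruling out such patterns for Delzant data is exactly where the paper does its real work: it imports the freeness of the abelian subtorus $N$ on $Z$ (Proposition 29.1 of \cite{Sil}, which uses the smoothness/lattice part of the Delzant condition, not merely simplicity), and then runs a graph argument on the couples, observing that if some connected component carried no simple multiplication on a nonzero coordinate, then setting $h=-1$ on that component would produce a nontrivial stabilizing element coming from $N$. Your rank condition encodes only the linear independence of the normals $\{v_i\}_{i\in I}$ and cannot substitute for that arithmetic input, precisely because $\Sp(1)$-conjugacy cannot distinguish $h$ from $h^{-1}$ --- which is exactly the information the signs of $A_{\mathcal{B}}$ carry.
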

\noindent The  proof of the first part of the statement is the same as in the symplectic case (see e.g. \cite{Sil}). We present  it here for the convenience of the reader.
\begin{proof} 
It is sufficient to show that $Z$ is bounded, since it is clearly closed.
 Let $P'=\pi^*(P)$. We want to show that $\s(Z)=P'$. Observe  that  if $y\in {\R^d}^*$ then $y\in P'$ if and only if $y$ is in the image of $Z$ via $\s$. Indeed we have that $y\in \s(Z)$ if and only if  
 \begin{itemize}
 \item $y$ is in the image of $\s$; 
 \item $i^*(y)=0$.
 \end{itemize}
 Now we use the expression of $\s$ and the fact that $\Im \:\pi^*={\ker}\; i^*$, so we have that these two conditions are equivalent to 
 \begin{enumerate}
 \item $<y,e_\ell>=y_\ell=-\frac{1}{4}|q_\ell|^4+\lambda_i\leq \lambda_\ell$ for all $\ell=1,\ldots, d$
 \item $y=\pi^*(x)$ for some $x\in {\R^m}^*$.
 \end{enumerate}
 From this we have, for all $\ell$
 $$<y,e_\ell>\leq \lambda_\ell\iff <\pi^*(x),e_\ell>\leq \lambda_\ell\iff$$$$<x,\pi(e_\ell)>\leq \lambda_\ell\iff <x,v_\ell>\leq \lambda_\ell\iff x\in P$$
thus $y\in \s(Z)\iff y\in \pi^*(P)$.  Thanks to the properness of $\s$ and the compactness of $P'$ we get that  $Z$ is bounded, and therefore compact.

A key ingredient to prove the freeness of the action of $\widehat{N}$ on $Z$ is the fact that $N$ acts freely on $Z$, as proven in Proposition 29.1 in \cite{Sil}. Consider a vertex  $p\in P$ and let $I=\{i_1,\ldots,i_m\}$ be the set of indices for the $m$ facets meeting at $p$.  Pick $q\in Z$ such that $\s(q)=\pi^*( p)$. Then $p$ is characterized by $m$ equations $<p,v_\ell> =\lambda_\ell$ with $\ell\in I$.
Now $$<p,v_\ell> =\lambda_\ell\iff<p,\pi(e_\ell)> =\lambda_\ell\iff<\pi^*( p),e_\ell> =\lambda_\ell$$
$$\iff<\s(q),e_\ell> =\lambda_\ell\iff-\frac{1}{4}|q_\ell|^4+\lambda_\ell=\lambda_\ell\iff q_\ell=0$$
 Hence those $q\in Z$ wich are sent to vertices  in $P$ are points whose coordinates in the set $I$ are zero. Without loss of generality, we can assume that $I=\{1,\ldots,m\}$ so that the stabilizer of $q$ is $(Sp(1)^d)_q=\{(h_1,\ldots,h_m,1,\ldots,1)\in Sp(1)^d\}\cong Sp(1)^m$. \\The group $\widehat{N}\cong Sp(1)^{d-m}$ acts on $Z$ and its action on  non-zero coordinates is of the form
 $h_{k_\ell}^{\alpha_\ell}q_\ell h_{j_\ell}^{\beta_\ell}$
 where $\alpha_\ell=0$ or $1$ and $\beta_\ell=0$ or $-1$.
 Each  $h_{k_\ell}, h_{j_\ell}$ for  $k_\ell,j_\ell\in\{1,\ldots,m\}$  acts on at least a  non-zero coordinate of $q$, since otherwise $N$ would not act freely on $Z$. \\
 If the action  on the $\ell$-th coordinate of $q$ is $q_\ell\mapsto h_{k_\ell}q_\ell  h_{j_\ell}^{-1}$ we say that  $({k_\ell},{j_\ell})$ form a  {\em couple}.
 We use all the couples to construct a graph $\Gamma$ with $m$ distinct  vertices labeled ${1,\ldots,m}$ in the standard way. We want to show that each connected component of $\Gamma$ contains a vertex $s$ such that $h_s$ acts as a simple multiplication on a non-zero coordinate of $q$.  On the contrary suppose that there is a connected component of $\Gamma$ whose vertices are $V_1,\ldots,V_t$ such that $h_{V_1},\ldots,h_{V_t}$  do not act as simple multiplication on any coordinate of $q$. This implies that setting $h_{V_1}=\ldots=h_{V_t}=-1$ identifies a non-trivial element in the stabilizer of $q$ in $N$, contradicting the  freeness of the action of $N$ on $Z$.
 This allows us to conclude that for each connected component of $\Gamma$ at least  a vertex $s$  is such that $h_s$ acts as a simple multiplication on a non-zero coordinate of $q$. Therefore
 $h_{s}=1$ and the same holds true for all $h_{V_i}$ in the same connected component.  
 Note that for all other $q'\in Z$, which are not sent to vertices,  the stabilizer is necessarly smaller and therefore trivial. 
\end{proof}
\noindent Thanks to the previous lemma we can consider the orbit space $$M_{P}=Z/\widehat{N}$$ which  is a compact manifold of (real) 
dimension $$\dim Z-3(d-m)=4d-(d-m)-3(d-m)=4m.$$

\noindent In the complex setting the group $N$ is normal in ${(S^1)}^{d}$ so that one can directly define the group $G$ that acts on $Z/N$ as the  quotient group  ${{(S^1)}^{d}}/{N}$. In the present setting we have to define appropriately the group $\widehat{G}$ as a $3m$-dimensional group, isomorphic to $Sp(1)^m$, acting on ${Z}/{\widehat{N}}$. And this is not always possible. 
In particular, this can be done when
 $\widehat{N}$ acts by simple multiplication (either on the right or on the left) on at least $m$ coordinates in $\HH^d$. 
We now state some conditions under which the action of $\widehat G$  is effective and has trivial isotropy.
\begin{proposition} \label{condsuffG} If a polytope in ${\R^m}^*$ is obtained
 by cutting the standard  simplex by means of hyperplanes parallel to the facets of the simplex, then the group $\widehat G$ can be defined in such a way that its action on ${Z}/{\widehat N}$ is effective and with trivial isotropy. 
\end{proposition}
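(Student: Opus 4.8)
The plan is to build $\widehat G$ explicitly from the facet combinatorics and to check effectiveness and triviality of the generic isotropy by the same graph bookkeeping used in the preceding Lemma on the freeness of $\widehat N$ on $Z$. First I would record the normals. Write $w_0,\dots,w_m$ for the primitive outward normals of the standard simplex, whose unique relation is $w_0+\dots+w_m=0$; cutting parallel to the facet with normal $w_k$ adds a facet whose primitive outward normal is $-w_k$, so that every normal of $P$ lies in $\{\pm w_0,\dots,\pm w_m\}$ and each cut facet is paired with exactly one original facet. Letting $\{F_k\}$ index the original facets, $J\subseteq\{0,\dots,m\}$ the cut directions, $N_j$ the facet cut parallel to $F_j$, and $\{e_i\}$ the standard basis of $\R^d$, the relations $\sum_k w_k=0$ and $w_j+(-w_j)=0$ give the reduced basis
$$\mathcal{B}=\Big\{\textstyle\sum_{k}e_{F_k}\Big\}\cup\big\{e_{F_j}+e_{N_j}:j\in J\big\}$$
of $\mathfrak{n}=\ker\pi$, all of whose entries are $0$ or $1$ with at most two per row; since no entry equals $-1$, $A_{\mathcal B}$ contains no copy of $\mathcal M$ and the action extends by Theorem \ref{adm}. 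The rows with a single nonzero entry are exactly the uncut originals $\{F_k:k\notin J\}$ and the cut facets $\{N_j:j\in J\}$, giving precisely $m+1\ge m$ simple–multiplication coordinates, the only non-simple ones being the $q_{F_j}$, $j\in J$.

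Next I would read off the action. Writing $h_0$ for the factor of $\widehat N\cong Sp(1)^{1+|J|}$ attached to $\sum_k e_{F_k}$ and $h_j$ for the one attached to $e_{F_j}+e_{N_j}$, a consistent side assignment (whose existence is exactly the content of Theorem \ref{adm}) has $h_0$ acting on the left of every $q_{F_k}$, and $h_j$ acting on the right of $q_{F_j}$ and on one side of $q_{N_j}$; in the notation of Proposition \ref{Adm} this is an action with $\alpha_\ell\in\{0,1\}$, $\beta_\ell\in\{0,-1\}$. I then select any $m$ of the $m+1$ simple coordinates, leaving one unchosen, and let the $i$-th factor $g_i$ of $\widehat G\cong Sp(1)^m$ act on the $i$-th selected coordinate by multiplication from the side opposite to $\widehat N$ and trivially on all other coordinates. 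Since each selected coordinate is acted on by $\widehat N$ from one side only, $g_i$ commutes with all of $\widehat N$, so $\widehat G$ commutes with $\widehat N$; and as $\widehat G$ acts by unit quaternions it preserves every $|q_\ell|$, hence preserves $Z=(i^*\circ\s)^{-1}(0)$ (recall $\s$ depends only on the norms, by Proposition \ref{moment}). Therefore $\widehat G$ descends to an action on $M_P=Z/\widehat N$.

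Finally I would establish effectiveness and triviality of the generic isotropy. A point $z\in Z$ is principal exactly when all its coordinates are nonzero, and $g\in\widehat G$ fixes $[z]\in M_P$ iff $g\cdot z=n\cdot z$ for some $n\in\widehat N$. Reading this equality on every coordinate that $\widehat G$ does not touch shows that $n$ fixes each such coordinate; these are the single leftover simple coordinate together with all non-simple $q_{F_j}$, i.e.\ exactly $1+|J|$ coordinates. On the leftover simple coordinate the factor acting there is forced to be $1$, and propagating through the relations $h_0\,q_{F_j}\,h_j^{-1}=q_{F_j}$ then forces the remaining factors to equal $1$ as well, so that $n=1$; this is the same connectivity argument as in the preceding Lemma, here applied to the star-shaped incidence graph that links $h_0$ to each $h_j$ through $q_{F_j}$. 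With $n=1$ the selected-coordinate equations $q_{\ell_i}\,g_i^{-1}=q_{\ell_i}$ give $g_i=1$ for all $i$. Hence the generic isotropy of $\widehat G$ on $M_P$ is trivial, and in particular the action is effective.

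The step I expect to be the main obstacle is this last one: verifying rigorously that the $1+|J|$ untouched coordinates pin down all $1+|J|$ factors of $\widehat N$, uniformly in the choice of which simple coordinate is left out and including the boundary case $J=\{0,\dots,m\}$ in which no uncut original survives. This is precisely where the hypothesis that $P$ is a parallel cut of the simplex is used, since it forces the incidence graph between $\{h_0\}\cup\{h_j\}$ and the untouched coordinates to be connected (a star through the $q_{F_j}$), exactly as in the freeness argument for $\widehat N$ on $Z$; without such connectivity one could only conclude discreteness rather than triviality of the isotropy.
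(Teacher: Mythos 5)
Your proposal is correct and follows essentially the same route as the paper: the same reduced basis (up to the sign flips needed to read off the left/right assignment) fed into Theorem \ref{adm}, the same definition of $\widehat G$ by opposite-side multiplication on $m$ of the $m+1$ simple coordinates leaving one untouched, and the same isotropy argument propagating from the untouched coordinate through the $q_{F_j}$ at a point with all coordinates nonzero. The only differences are organizational: you handle an arbitrary set of parallel cuts uniformly and let $\widehat G$ act upstairs on $Z$ commuting with $\widehat N$ before descending, whereas the paper writes out the maximal case $d=2m+2$ explicitly, defines the $\widehat G$-action directly on $\widehat N$-orbits, and remarks that fewer cuts follow by slight modification.
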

\begin{proof} Since we start from the simplex in ${\R^m}^*$, the $m+1$ oriented vectors orthogonal to the facets  are given by $\{-e_1,-e_2,\ldots,-e_m,e_1+e_2+\cdots+e_m\}.$ If we cut our simplex with $m+1$ hyperplanes parallel to its facets, we can get a polytope with $d=2m+2$ facets. The matrix associated with $\pi$ is an $m\times (2m+2)$ matrix given by $(-Id_m,C_{m+1},-C_{m+1}, Id_m)$ where $C_{m+1}$ is the column vector $e_1+e_2+\cdots+e_m$ and $Id_m$ is the identity matrix of order $m$. The kernel $\mathfrak{n}\subseteq \R^d$ of $\pi$ is defined by the linear system of $m$ equations
\begin{equation}
\left\{\begin{array}{l}
x_1=x_{m+1}-x_{m+2}+x_{m+3}\\
x_2=x_{m+1}-x_{m+2}+x_{m+4}\\
\cdots\\
x_m=x_{m+1}-x_{m+2}+x_{2m+2}\\
\end{array}
\right.
\end{equation}
We now choose  the new parameters $x'_{m+1}=x_{m+2}-x_{m+1},x_{m+2},\ldots,x_{2m+2}$ instead of $x_{m+1},x_{m+2},\ldots,x_{2m+2}$  and we get
a  basis $\mathcal{B}$  for $\mathfrak{n},$ 
given by the $m+2$  vectors 
$$\{-f_1-f_2-\ldots -f_m+f_{m+1},f_{m+2},f_1+f_{m+3},f_2+f_{m+4},\ldots,f_m+f_{2m+2}\}$$ where $f_i$ are the elements of the standard basis in 
$\R^{2m+2}$. This basis is reduced and the corresponding $A_\mathcal{B}$ does not contain the sub matrix $\mathcal{M}$. Thanks to Proposition \ref{adm} we can extend the action of $N$ to $\widehat N$ on $\H^{2m+2}$.  
For a generic element $n=(h_{m+1},h_{m+2},\ldots,h_{2m+2})$ in $\widehat N$ and $q=(q_1,q_2,\ldots,q_{2m+2})$ in $\H^{2m+2}$ we have
$$n\cdot q:=(h_{m+3}q_1h_{m+1}^{-1},\ldots,h_{2m+2}q_mh_{m+1}^{-1},h_{m+2}q_{m+1}h_{m+1}^{-1},h_{m+2}q_{m+2},\ldots,h_{2m+2}q_{2m+2});$$
Then  we say that a generic element $g=(g_1,g_2,\ldots,g_m)$ of $\widehat G \subseteq Sp(1)^d$  acts on the orbit $(\widehat N\cdot q)$ as  
\begin{align*}
g\cdot (\widehat N\cdot q):= (h_{m+3}q_1h_{m+1}^{-1},&...,h_{2m+2}q_mh_{m+1}^{-1}, h_{m+2}q_{m+1}h_{m+1}^{-1},\\
& h_{m+2}q_{m+2}g_1^{-1},..,h_{2m+1}q_{2m+2}g_{m}^{-1},h_{2m+2}q_{2m+2}).
\end{align*}
It is clear that $\widehat G$ takes $\widehat N$-orbits to $\widehat N$-orbits. Moreover its action is effective and has trivial principal isotropy. Indeed
suppose the group $\widehat G$ fixes an orbit through a point $\widetilde q$, whose coordinates are all non-vanishing, thus  from the relation  $h_{2m+2}\widetilde {q}_{2m+2}=\widetilde {q}_{2m+2}$ we deduce that
 $h_{2m+2}=1$ and consequently that $h_{m+1}=h_{m+2}=\ldots=h_{2m+2}=1$.
To conclude note that the previous argument (slightly modified) can be applied also if  we cut by means of   less than $m+1$ hyperplanes.
\end{proof}

\noindent Note that  Example \ref{Ex} shows that the assumption in  Proposition \ref{condsuffG},  on the way the simplex is cut  to construct the
 polytope, is not necessary to define an action of $\widehat{G}$ on the quotient manifold. Indeed in this case the action of $\widehat{G}$ given by
 $$g\cdot \widehat{N}q:=(h_1q_1h_3^{-1},g_1q_2h_2^{-1},h_1q_3g_2^{-1},g_3q_4h_3^{-1},h_2q_5h_3^{-1},q_6h_2^{-1})
$$
has trivial principal isotropy (and hence is effective).

\subsection{Examples of the procedure}
 We now collect some useful examples of manifolds  constructed starting from  a given polytope.
 \begin{example}{\em  Starting from the standard simplex $\Delta_m$ in ${\R^m}^*$ we obtain the quaternionic projective space $\H \P^m$.
 Indeed, the kernel $\mathfrak{n}$ of $\pi:\R^{m+1}\to\R^m$ is defined by the linear system
 \begin{equation}
\left\{\begin{array}{l}
x_1=x_{m+1}\\
x_2=x_{m+1}\\
\cdots\\
x_m=x_{m+1}\\
\end{array}
\right.
\end{equation}
So the action of $\widehat{N}$ is given by
$$n\cdot q:=(hq_1,\ldots,h q_{m+1}),$$
and the corresponding  tri-moment map is 
$$i^*\circ \sigma(q):=-\frac{1}{4} \sum_{i=1}^{m+1}|q_i|^4+1.$$
Thus $Z=(i^*\circ \sigma)^{-1}(0)$ is diffeomorphic to the $(4m+3)$-dimensional sphere in $\R^{4m+4}$. Recalling the classical
 Hopf fibration $\widehat{N}\cong S^3\rightarrow S^{4m+3}\rightarrow \H \P^m$, we get that $M_{\Delta_m}=\H \P^m$.  The action of $\widehat{G}\cong Sp(1)^m$ can  be defined on $\widehat{N}$-orbits as
 $$g\cdot \widehat{N}q=(hq_1g_1^{-1},\ldots,hq_{m}g_m^{-1},h q_{m+1})$$
and it has clearly  trivial isotropy (and hence it is effective).}
 \end{example}
 \begin{example}{\em  If the polytope $P$ is a square $[0,1]\times[0,1]\in {\R^2}^*$
  the corresponding manifold $M_P$ is  $\H \P^1\times \H \P^1$. Indeed in this case $d=4$ 
  the kernel $\mathfrak{n}$ of $\pi:\R^{4}\to\R^2$ is defined by the linear system
 \begin{equation}
\left\{\begin{array}{l}
x_1=x_{3}\\
x_2=x_{4}\\
\end{array}
\right.
\end{equation}
So the action of $\widehat{N}$ is given by
$$n\cdot q:=(h_1q_1,h_2q_2, h_1q_3,h_2q_{4}),$$
and the corresponding  tri-moment map is 
$$i^*\circ \sigma(q):=\left(-\frac{1}{4} (|q_1|^4+|q_3|^4)+1,-\frac{1}{4} (|q_2|^4+|q_4|^4)+1\right).$$
Thus $Z=(i^*\circ \sigma)^{-1}(0,0)$ is diffeomorphic to the product of two $7$-dimensional spheres in $\R^{8}$
   on which $\widehat{N}\cong Sp(1)^2$ acts separately on each factor. Thus, using again the Hopf fibration $S^3\rightarrow S^7\rightarrow S^4$, we find that the quotient space is the above mentioned product.  The action of $\widehat{G}\cong Sp(1)^2$ which  can be defined on $\widehat{N}$-orbits as
 $$g\cdot \widehat{N}q=(h_1q_1g_1^{-1},h_2q_2g_2^{-1}, h_1q_3,h_2q_{4}),$$
has clearly  trivial isotropy (and hence  is effective).\\
This procedure can be naturally generalized to prove that, starting from the $m$-dimensional cube  $[0,1]\times\cdots\times[0,1]\in {\R^m}^*$, we find  the product of $m$-copies of $\H\P^1$ acted on by
$\widehat{G}\cong Sp(1)^m$ whose action  is defined on $\widehat{N}\cong{Sp(1)^m}$-orbits as
 $$g\cdot \widehat{N}q=(h_1q_1g_1^{-1},h_2q_2g_2^{-1},\ldots,h_mq_mg_m^{-1}, h_1q_{m+1},\ldots, h_mq_{2m}).$$
Note that the $m$-dimensional cube can be obtained by cutting the standard simplex  by means of $m$ hyperplanes parallel to the coordinate hyperplanes  of ${\R^m}^*$.}

 \end{example}
\begin{example}\label{trapezoid}  {\em We here start from the trapezoid $T$ in ${\R^2}^*$, defined by $$T=\{{x}\in {\R^2}^* | <{x},v_1>\leq 0, <{x},v_2>\leq 0, <{x},v_3>\leq 1, <{x},v_4>\leq 2\}$$ with $v_1=-e_2,v_2=-e_1,v_3=e_2$ and $v_4=e_1+e_2$. 
The kernel $\mathfrak{n}$ of $\pi:\R^{4}\to\R^2$ is defined by the linear system
 \begin{equation}
\left\{\begin{array}{l}
x_1=x_{3}+x_{4}\\
x_2=x_{4}\\
\end{array}
\right.
\end{equation}
so the action of $\widehat{N}\cong Sp(1)^2$ is given by
$$n\cdot q:=(h_1q_1h_2^{-1},q_2h_2^{-1},h_1q_3,q_4h_2^{-1})$$
and the corresponding  tri-moment map is 
$$i^*\circ \sigma(q):=\left(-\frac{1}{4}(|q_1|^4+|q_3|^4)+1,-\frac{1}{4}(|q_1|^4+|q_2|^4+|q_4|^4)+2\right).$$
Thus $Z=(i^*\circ \sigma)^{-1}(0,0)$ is 
is given by
\begin{equation*}
\left\{ \begin{array}{l} |q_1|^4+|q_3|^4=4\\
|q_1|^4+|q_2|^4+|q_4|^4=8\end{array}.\right.
\end{equation*} 
It is not difficult to show that  the orbit space $M_T=Z/\widehat{N}$ is a (non-trivial) $\H\P^1$-bundle on $\H\P^1$ and hence coincides with the blow-up of $\H\P^2$ at a point. 
The action of $\widehat{G}\cong Sp(1)^2$ which  can be defined on $\widehat{N}$-orbits as
 $$g\cdot \widehat{N}q=(h_1q_1h_2^{-1},g_1q_2h_2^{-1},h_1q_3g_2^{-1},q_4h_2^{-1})$$
has clearly  trivial isotropy (and hence  is effective).}
\end{example}
\noindent As proved in e.g. \cite{GGS}, the blow-up of $\HP^2$ at a point, i.e. the manifold $M_T,$ is the  connected sum $\HP^2 \#\ \overline{\HP^2}$ where the symbol $\overline{\HP^2}$ denotes the quaternionic projective space with the reversed orientation.\\
The trapezoid $T$ is obtained via a cut by means of a hyperplane (a straight line) of the standard simplex $\Delta_2$, and the corresponding manifold $M_T$ is the blow-up at a point of $M_{\Delta_2}\cong\HP^2$. The fact that cutting certain polytopes corresponds to blowing-up the associated manifolds is indeed general as shown in the following section.
\begin{remark}\label{nonfunz} {\em Note that if one considers the trapezoid whose vertices are $(0,0);(l+1,0);(0,1);(1,1)$ the action of $N$ cannot be extended to $\widehat{N}\cong Sp(1)^2$ for $l>1$. 
Indeed in this case the kernel $\mathfrak{n}$ of $\pi:\R^4\to\R^2$ is spanned by the vectors $\{(1,0,1,0),(1,l,0,1)\}$ and does not admit a basis $\mathcal {B}$ such that condition (1) in Proposition \ref{Adm} is satisfied.
Therefore we cannot apply the procedure starting from this Delzant polytope.}
\end{remark}

 \section{$4$-plectic reduction and $4$-plectic cut.}
\noindent Starting from a symplectic manifold $(M,\omega)$ acted on by a compact Lie group $K$ in a Hamiltonian fashion with moment map $\mu$, the Marsden-Weinstein reduction is a tool that permits to equip the manifold $\mu^{-1}(x)/K$, when $x\in \mathfrak{k}^*$ is a regular value of $\mu$,  with a symplectic form $\omega_{red}$  (see e.g. \cite{Mar}).\\ In the $4$-plectic setting  the procedure holds under stronger hypotheses  and requires that the starting $4$-plectic form be {\em horizontal}.\\ Let  $(M^{4m},\psi)$  be a $4$-plectic manifold on which the group  $G={Sp(1)}^m$ acts with tri-moment map $\sigma.$ Let $x$ be a regular value in ${\R^m}^*$ for the map $\sigma.$   Consider the $G$-invariant  and smooth $Z_{x}={\sigma}^{-1}{(x)}$. The stabilizers of points in $Z_{x}$ form a group bundle over it, which we assume to be smooth.  We say that these stabilizers form a {\em spheroid bundle} if they are isomorphic to the product of copies of $\Sp(1)$. Then the quotient space $Y_{x}:=Z_{x}/G$, usually known as {\em reduced space} is a smooth manifold. We say that the $4$-plectic form $\psi$ is {\em horizontal} on $Z_{x}$ if and only if the contraction of $\psi$ along every fundamental vector field $\hat\beta$ (for each $\beta\in \mathfrak{g}$) is  zero along  $Z_{x}$. Formally
 $$\iota_{\hat{\beta}}\psi_{|_{Z_{x}}}=0\;\;\;\forall \beta \in \mathfrak{g}$$
 With this notation we recall 
 \begin{theorem}[Theorem 3.1 \cite{F}] \label{reduction} Let  $(M^{4m},\psi)$  be a $4$-plectic manifold on which the group  $G={Sp(1)}^m$ acts with tri-moment map $\sigma$ and   $x$ be a regular value of  $\sigma.$ Assume that the stabilizers of all points in $Z_{x}$ form a smooth spheroid  bundle over $Z_{x}$, and that $\psi$ is horizontal. Then the {\em reduced space} $Y_{x}$ is a smooth manifold admitting a $4$-plectic form $\psi_{red}$, such that $$\pi^*(\psi_{red})=i^*{(\psi)}$$ where $\pi:=Z_{x}\rightarrow Y_{x}$ and $i:=Z_{x}\rightarrow M$ denote respectively the projection and the inclusion map.
 \end{theorem}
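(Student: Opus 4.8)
The plan is to follow the blueprint of Marsden--Weinstein symplectic reduction, transported to the $4$-plectic category. Since $x$ is a regular value of $\sigma$, the level set $Z_x = \sigma^{-1}(x)$ is a smooth $G$-invariant submanifold of $M$ of codimension equal to the number of $Sp(1)$-factors of $G$, and the spheroid bundle hypothesis guarantees that the orbit space $Y_x = Z_x/G$ is a smooth manifold with $\pi : Z_x \to Y_x$ a surjective submersion (in fact a fiber bundle with spheroidal fibers). There are then three things to establish: that $i^*\psi$ descends to a well-defined $4$-form $\psi_{red}$ on $Y_x$ with $\pi^*\psi_{red} = i^*\psi$; that $\psi_{red}$ is closed; and that $\psi_{red}$ is non-degenerate. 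I expect the first two steps to be formal, and the non-degeneracy to be the heart of the matter.

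For the descent I would show that $i^*\psi$ is \emph{basic}, i.e. both horizontal and $G$-invariant, which is exactly the condition for a form on the total space of $\pi$ to be the pullback of a unique form on the base. Horizontality of $i^*\psi$, namely $\iota_{\widehat\beta}(i^*\psi) = i^*(\iota_{\widehat\beta}\psi) = 0$ for every $\beta \in \mathfrak{g}$, is precisely the standing hypothesis that $\psi$ be horizontal along $Z_x$ (here one uses that each fundamental field $\widehat\beta$ is tangent to $Z_x$). Invariance is automatic: since the action preserves $\psi$ one has $\mathcal{L}_{\widehat\beta}\psi = 0$ on $M$, hence $\mathcal{L}_{\widehat\beta}(i^*\psi) = i^*(\mathcal{L}_{\widehat\beta}\psi) = 0$. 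These two facts produce a unique $\psi_{red}$ with $\pi^*\psi_{red} = i^*\psi$. Closedness is then immediate from the injectivity of $\pi^*$ on forms: $\pi^*(d\psi_{red}) = d(i^*\psi) = i^*(d\psi) = 0$, whence $d\psi_{red} = 0$.

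The main obstacle is non-degeneracy. Fix $p \in Z_x$ and set $[p] = \pi(p)$; identifying $T_{[p]}Y_x \cong T_pZ_x / T_p(G\cdot p)$, non-degeneracy of $\psi_{red}$ is equivalent to the statement that the radical of the restricted form $\psi_p|_{T_pZ_x}$, i.e. $\{v \in T_pZ_x : \psi_p(v, w_1, w_2, w_3) = 0 \text{ for all } w_1, w_2, w_3 \in T_pZ_x\}$, coincides exactly with the vertical (orbit) subspace $T_p(G\cdot p)$. The inclusion $T_p(G\cdot p) \subseteq \mathrm{rad}$ is again a direct consequence of horizontality, since $\iota_{\widehat\beta}\psi$ vanishes on $T_pZ_x$. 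The reverse inclusion is the delicate point. Here I would exploit the tri-moment identity $\ker d\sigma_p = V^{\perp_{\psi_p}}$ recalled in Section 2 (with $V = \Lambda^3 T_p(G\cdot p)$), which is the exact $4$-plectic surrogate of the symplectic relation $\ker d\mu_p = (T_p\,\mathrm{orbit})^{\perp_\omega}$, together with the explicit pairing $d\sigma_i = \iota_{\widetilde{\delta}_i}\psi$ between the transverse directions and the orbit tri-vectors $\widetilde{\delta}_i = \widehat H^i\wedge\widehat X^i\wedge\widehat Y^i$ computed in Proposition \ref{moment}. Since $T_pZ_x = \ker d\sigma_p$, a vector $v$ in the radical forces $\iota_v\psi$ to lie in the ideal generated by $\{d\sigma_i\}$; pairing this against a frame transverse to $Z_x$ and using the non-degeneracy of $\psi$ on all of $T_pM$ should pin $v$ down to the orbit directions, and I would close the argument with a dimension count in which the freeness (respectively spheroid) hypothesis fixes $\dim T_p(G\cdot p)$ and forces $\dim\mathrm{rad} = \dim T_p(G\cdot p)$. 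The subtlety I anticipate is that, unlike in the symplectic case, the multisymplectic orthogonal complement is not reflexive, so the reverse inclusion cannot be obtained by a one-line double-orthogonal argument and genuinely requires the pairing $d\sigma_i = \iota_{\widetilde{\delta}_i}\psi$ together with the non-degeneracy of $\psi$ to control how $\iota_v\psi$ sits inside the ideal generated by the $d\sigma_i$.
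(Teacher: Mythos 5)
A preliminary remark: the paper itself contains no proof of this statement --- it is imported verbatim from Foth (Theorem 3.1 of \cite{F}) and used as a black box --- so your attempt can only be measured on its own merits against the standard reduction scheme. Your first two steps are correct and are exactly that scheme: $i^*\psi$ is basic (horizontal along $Z_x$ by hypothesis, $G$-invariant because the action preserves $\psi$, with connected fibres since $G$ is connected), hence descends to a unique $\psi_{red}$ with $\pi^*\psi_{red}=i^*\psi$, and closedness follows from $\pi^*(d\psi_{red})=d(i^*\psi)=i^*(d\psi)=0$ together with injectivity of $\pi^*$.

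The gap is in the non-degeneracy step, exactly where you placed the weight, and your sketch does not carry it. The ideal-membership observation is fine: if $v\in T_pZ_x$ lies in the radical of $i^*\psi$, then $\iota_v\psi_p$ vanishes on $\Lambda^3$ of $T_pZ_x=\bigcap_i\ker(d\sigma_i)_p$, hence lies in the ideal generated by the independent covectors $(d\sigma_i)_p$. But from there nothing in your plan actually forces $v\in T_p(G\cdot p)$. Non-degeneracy of $\psi$ on $T_pM$ gives only injectivity of $v\mapsto\iota_v\psi$; it gives no control over \emph{which} vectors have contraction lying in that ideal. Horizontality shows that the orbit directions are among them, i.e.\ $T_p(G\cdot p)\subseteq\mathrm{rad}$, but no stated hypothesis bounds $\dim\mathrm{rad}$ from above by $\dim T_p(G\cdot p)$, and your closing ``dimension count'' presupposes exactly that bound: it is circular. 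As you yourself note, the multisymplectic orthogonal is not reflexive, so there is no double-orthogonal substitute; the reverse inclusion is genuinely additional input, not a consequence of the listed hypotheses. That it cannot be obtained formally is visible in the paper itself: in the example immediately following the theorem, the upstairs form $\psi_h$ on $\H^2$ is a wedge of four exact $1$-forms, hence \emph{degenerate} as a $4$-form (its pointwise kernel is $4$-dimensional), yet the reduced form on $\H\P^1$ is asserted to be $4$-plectic. So the route ``non-degenerate upstairs $\Rightarrow$ non-degenerate downstairs'' on which your plan relies is not the mechanism at work in the intended applications; there, non-degeneracy of $\psi_{red}$ is in effect verified directly on the quotient. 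To complete a proof you would need either an extra pointwise linear-algebra hypothesis identifying the radical of $\psi|_{T_pZ_x}$ with the orbit directions, or a case-by-case verification of non-degeneracy of the reduced form.
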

 \begin{example}Let $X=\H^2$ with the standard diagonal action of $\Sp(1)$  and let the $4$-form $\psi_h$ be given by
 $$\psi_h=d(|q_1|^4-|q_2|^4)\wedge d(\alpha_1-\alpha_2)\wedge d(\beta_1-\beta_2)\wedge d(\gamma_1-\gamma_2),$$
 where $q_r=x_r+\alpha_r i+\beta_r  j+\gamma_r k$ for $r:=1,2$. The form $\psi_h$ is horizontal. The reduced space is isomorphic  to $\H\P^1$, with an appropriate $4$-plectic form $\psi_{red}.$
 Similarly we can obtain an invariant $4$-plectic form on $\H\P^m$ that we will denote again by $\psi_{red}$ \cite{F}. \end{example}

\noindent We would like to define a $4$-plectic analog of blowing-up $M$ at  a point $p$.  In \cite{GGS} we have proved 
that if $M$ is  a regular  quaternionic manifold of real dimension $4m$, then the blow-up of $M$ at one point is a $4m$-dimensional  regular quaternionic manifold which is  diffeomorphic to
$M\#\H\P^m$.
Note
that topologically $M\#\H\P^m$ can be obtained by removing a ball centered at $p$ and then collapsing the boundary $S^{4m-3}$ along the fibers of the Hopf fibration $S^3\to S^{4m-3}\rightarrow\H\P^m$.  The generalization in the $4$-plectic set up  of the so-called symplectic cutting due to E. Lerman is a particular case of the quaternionic blow-up.
Let $(M^{4m},\psi)$ be a $4$-plectic manifold equipped with a generalized Hamiltonian $Sp(1)^m$-action. Consider the restricted diagonal  $Sp(1)$-action, and let $h:M\rightarrow \R$ be the corresponding tri-moment  map. Let $\varepsilon$ be a regular value of $h$. 
We assume that the $Sp(1)$-action on $h^{-1}(\varepsilon)$ is free. \\We introduce the following notations: we denote by $M_{h>\varepsilon}, M_{h\geq \varepsilon}$ the pre-images of $(\varepsilon,\infty)$ and $ [\varepsilon,\infty)$ under $h : M\rightarrow \R$, and denote by $\overline{M_{h\geq\varepsilon}}$  the manifold which is obtained by collapsing the boundary ${h^{-1}}(\varepsilon)$  of $M_{h\geq \varepsilon}$ along the orbits of the $Sp(1)$-action. The manifold $\overline{M_{h\geq \varepsilon}}$ can be therefore identified with the blow-up of $M$ at a point and
is called {\em the $4$-plectic cut } of $M$. \begin{theorem} \label{TEO7.2}Let $(M^{4m},\psi)$ be a $4$-plectic manifold equipped with a generalized Hamiltonian $Sp(1)^m$-action. Consider the restricted action of a single  $Sp(1)$, and let $h:M\rightarrow \R$ be the corresponding tri-moment  map. Let $\varepsilon$ be a regular value of $h$. 
We assume that the $Sp(1)$-action on $h^{-1}(\varepsilon)$ is free. Whenever the form $\psi\oplus\psi_0$ on $M\times \H$ is horizontal along ${(h-\frac{1}{4}|q|^4)}^{-1}(\varepsilon),$ there is a natural $4$-plectic structure $\Psi_{\varepsilon}$ on $\overline{M_{h\geq\varepsilon}}$ such
that the restriction of $\Psi_{\varepsilon}$ to $M_{h>\varepsilon} \subseteq \overline{M_{h\geq \varepsilon}}$ equals $\psi$ .
\end{theorem}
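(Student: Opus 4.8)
The plan is to mimic Lerman's symplectic cutting construction in the $4$-plectic setting, using the $4$-plectic reduction of Theorem \ref{reduction} in place of Marsden--Weinstein reduction. First I would form the product $4$-plectic manifold $(M \times \H, \psi \oplus \psi_0)$, where $\psi \oplus \psi_0 = \mathrm{pr}_M^*\psi + \mathrm{pr}_\H^*\psi_0$ is closed because each factor is, and is non-degenerate by a short check on the bigrading of $\Lambda^3(T^*M \oplus T^*\H)$ (a contracted vector splits into an $M$-part and an $\H$-part lying in complementary graded pieces, so both must vanish). On this product I would consider the diagonal $\Sp(1)$-action, where $\Sp(1)$ acts on $M$ via the restricted action with tri-moment map $h$ and on $\H$ by left multiplication, whose tri-moment map is $-\tfrac14|q|^4$ by Proposition \ref{moment}. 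Since the cross-terms of $\psi \oplus \psi_0$ vanish when contracted along the diagonal fundamental field, the tri-moment map of the diagonal action is, for the appropriate choice of constant, $\tilde h := h - \tfrac14|q|^4$.

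Next I would verify the hypotheses of Theorem \ref{reduction} at the level $\varepsilon$ of $\tilde h$. Because $d\tilde h_{(p,q)}$ vanishes exactly when $dh_p = 0$ and $q = 0$, every critical value of $\tilde h$ is a critical value of $h$; as $\varepsilon$ is regular for $h$, it is regular for $\tilde h$, so $Z_\varepsilon := \tilde h^{-1}(\varepsilon)$ is a smooth $(4m+3)$-manifold. I would then show the diagonal action is free on $Z_\varepsilon$ by splitting into two cases: where $q \ne 0$ the left $\Sp(1)$-action on $\H \setminus \{0\}$ is already free, and where $q = 0$ one has $h(p) = \varepsilon$, so freeness follows from the assumed freeness on $h^{-1}(\varepsilon)$. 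Freeness makes the stabilizer bundle trivial (a degenerate instance of a spheroid bundle) and the quotient $Y_\varepsilon = Z_\varepsilon/\Sp(1)$ a smooth $4m$-manifold; together with the hypothesized horizontality of $\psi \oplus \psi_0$ along $Z_\varepsilon$, Theorem \ref{reduction} then furnishes a $4$-plectic form $\Psi_\varepsilon$ on $Y_\varepsilon$ with $\pi^*\Psi_\varepsilon = i^*(\psi \oplus \psi_0)$.

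The heart of the argument is to identify $Y_\varepsilon$ with the $4$-plectic cut $\overline{M_{h\ge\varepsilon}}$ and to check that $\Psi_\varepsilon$ restricts to $\psi$. Over the locus $q \ne 0$ I would use the polar decomposition $q = |q|\,u$, $u \in \Sp(1)$, to see that the slice $\{q > 0\text{ real}\}$ meets each diagonal orbit in $Z_\varepsilon \cap \{q \ne 0\}$ exactly once; the $\Sp(1)$-invariant assignment $(p,q) \mapsto (q/|q|)^{-1}\cdot p$ then descends to a diffeomorphism of this open part of $Y_\varepsilon$ onto $M_{h>\varepsilon}$, since $\tilde h = \varepsilon$ forces $h(p) = \varepsilon + \tfrac14 |q|^4 > \varepsilon$. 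Over $q = 0$ the reduced space is $h^{-1}(\varepsilon)/\Sp(1)$, which is precisely the orbit-space collapse of the boundary in the definition of $\overline{M_{h\ge\varepsilon}}$, so the two pieces glue to a diffeomorphism $Y_\varepsilon \cong \overline{M_{h\ge\varepsilon}}$. Finally, pulling $\psi \oplus \psi_0$ back through the section $p \mapsto (p, q(p))$ with $q(p) = (4(h(p)-\varepsilon))^{1/4} > 0$, the $\psi$-term returns $\psi$, while the $\psi_0$-term dies because the image of $q(p)$ is a one-dimensional curve in $\R \subset \H$, on which a $4$-form vanishes; hence $\Psi_\varepsilon|_{M_{h>\varepsilon}} = \psi$.

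The main obstacle I anticipate is not the construction but matching it cleanly to the scope of Theorem \ref{reduction}: one must ensure that the single-$\Sp(1)$ reduction of the $(4m+4)$-dimensional $M \times \H$ genuinely falls under the reduction theorem, so that the reduced object is $4$-plectic and of dimension $4m$, and that freeness on $Z_\varepsilon$ indeed yields the smoothness and spheroid-bundle conditions required there. Once these points are secured, the remaining verifications — freeness on $Z_\varepsilon$, the slice identification of $Y_\varepsilon$ with the cut, and the vanishing of the $\psi_0$-contribution along the section — are routine.
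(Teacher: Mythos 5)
Your proposal is correct and takes essentially the same route as the paper's proof: form $(M\times\H,\ \psi\oplus\psi_0)$ with the diagonal $Sp(1)$-action and tri-moment map $F(m,q)=h(m)-\frac{1}{4}|q|^4$, identify $F^{-1}(\varepsilon)/Sp(1)$ with $\overline{M_{h\geq\varepsilon}}$ (the paper uses the fundamental set $M_{h>\varepsilon}\times\{1\}$ where you use the slice of positive real $q$), and invoke the $4$-plectic reduction theorem under the horizontality hypothesis. Your extra verifications --- regularity of $\varepsilon$ for $F$, freeness on the level set, non-degeneracy of the product form, and the section-based check that $\Psi_\varepsilon$ restricts to $\psi$ --- are details the paper leaves implicit.
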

\begin{proof}
Consider the  product $(M \times\H, \Phi=\psi\oplus \psi_0)$ and the Hamiltonian $Sp(1)$-action $$\lambda (m,q)=(\lambda(m),\lambda q);\;\;\;\lambda\in Sp(1),m\in M, \;q\in \H.$$
The tri-moment map is $$F(m, q) = h(m)- \frac{1}{4} |q|^4.$$
Observe the following identification
$$F^{-1}(\varepsilon) = \{(m, q)\;| h(m) > \varepsilon, |q|^2 =2\sqrt{(h(m) - \varepsilon)}\} \cup \{(m, 0)|h(m) = \varepsilon\}$$$$= {M_{h>\varepsilon} \times  S^3 \cup h^{-1}(\varepsilon)}.$$
So that
 $$F^{-1}(\varepsilon)/Sp(1)=\left({M_{h>\varepsilon} \times S^3 \cup h^{-1}(\varepsilon)}\right)/Sp(1)=\overline{M_{h\geq\varepsilon}}$$  since  a fundamental set in ${M_{h>\varepsilon}} \times S^3$ for the action of $Sp(1)$ is given by ${M_{h>\varepsilon} }\times \{1\}.$ 
The assumption on the horizontality of $\Phi$ implies that we can apply Theorem \ref{reduction}, and equip
 $F^{-1}(\varepsilon)/Sp(1)$ with  a $4$-plectic structure that equals $\psi$ when restricted to the open submanifold $M_{h>\varepsilon}$. 
 \end{proof}
 \begin{definition} Let $M$ be  a $4m$-dimensional manifold obtained as a reduced space from the $4$-plectic manifold $(N,\psi_h)$ acted on in a generalized Hamiltonian fashion by a group $Sp(1)^k$, with tri-moment map $\sigma$, and let $\psi_h$ be horizontal on $\sigma^{-1}(x)$ for some regular value $x$ of $\sigma$.  Then $M$ equipped with the natural $4$-plectic form $\psi^{x}_{red}$ such that $\pi^*(\psi^{x}_{red})=i^*(\psi_h)$, is said to be  {\em  obtained by reduction}.
 \end{definition}
 \noindent Those manifolds  $M$  obtained by reduction  are special:  as an application of Theorem \ref{TEO7.2} we will see that any $4$-plectic cut of such an $M$ can be equipped with a canonical $4$-plectic structure.

 \subsection{$4$-plectic form on the blow-up}

  \label{blow}
 We here do explicit calculations for the case $M=\H\P^2$,  using the notation established in the proof of Theorem \ref{TEO7.2}.\\
 We  want  to prove that the form $\Phi=\psi^{\varepsilon}_{red}\oplus \psi_0$ on $\H\P^2\times \H$ is horizontal on ${F^{-1}(\varepsilon)}$ so that, applying Theorem \ref{TEO7.2}, we can equip the blow-up of $\H \P^2$ with a $4$-plectic form.  More precisely we show that $\iota_{\hat \beta}\Phi |_{F^{-1}(\varepsilon)}=0$, for all $\beta\in \mathfrak{sp}(1)$. \\ First note that the contraction of the form $\Phi$ along $\hat\beta_{q}$ for $q\in S^3$ is zero. Indeed for example,  using the standard basis of $\mathfrak{sp}(1)$, if $\beta=H,$ $$\widehat H_{q}=-x_2\frac{\partial}{\partial x_1}+x_1\frac{\partial}{\partial x_2}-x_4\frac{\partial}{\partial x_3}+x_3\frac{\partial}{\partial x_4}$$
the contraction of $\Phi$ along  it, is therefore given by 
$$\begin{aligned}
&\iota_{\widehat{H}_q}\Phi=\iota_{\widehat{H}_q}\psi_0\\
&=x_2 dx_2\wedge dx_3\wedge dx_4+x_1 dx_1\wedge dx_3\wedge dx_4-x_3 dx_1\wedge dx_2\wedge dx_3-x_4 dx_1\wedge dx_2\wedge dx_4. 
\end{aligned} $$ The claim follows since $$S^3=\{q=x_1+ix_2+jx_3+kx_4\in \H\;|\;|q|^2=|x_1|^2+|x_2|^2+|x_3|^2+|x_4|^2=c\}$$ and thus 
$x_1dx_1=-\sum_{i=2}^4 x_i dx_i$ on it. Analogously 
for $\widehat X_q$ and $\widehat Y_q$ we have on $S^3$, $\iota_{\widehat{X}_q}\Phi=\iota_{\widehat{Y}_q}\Phi=0.$
 In \cite{F}(p.337) it is proven  that $(\H\P^2,\psi^{\varepsilon}_{red})$ can be obtained via reduction from $(\H^3,\psi_h)$, acted on by $Sp(1)$ with tri-moment map  $\s$, where $\psi_h$ is horizontal.\\  Note that $Sp(1)^2$ acts on $\s^{-1}(\varepsilon)$ and on $\H\P^2$. Here we use the fact that the projection $\pi:\s^{-1}(\varepsilon)\subseteq \H^3 \rightarrow \s^{-1}(\varepsilon)/Sp(1)$ is $Sp(1)^2$-equivariant. By the equivariance, for every $m\in \s^{-1}(\varepsilon)$ $$\pi_*(\hat\beta_m)=\hat \beta_{\pi(m)}.$$ 
Take a point $p=\pi(m)$ in $\H\P^2=\s^{-1}(\varepsilon)/Sp(1)$, 
$$\iota_{\hat\beta_p}\Phi_{|_{F^{-1}(\varepsilon)}}=\iota_{\hat\beta_p}{\psi^{\varepsilon}_{red}}_{|_{\H\P^2_{\s\geq\varepsilon}}}=\iota_{\hat\beta_{\pi(m)}}{\psi^{\varepsilon}_{red}}_{|_{\H\P^2_{\s\geq\varepsilon}}}=\iota_{\pi_*(\hat \beta_m)}{\psi^{\varepsilon}_{red}}_{|_{\H\P^2_{\s\geq\varepsilon}}}.$$
The tangent space of $\H\P^2_{\s\geq\varepsilon}$ at $p=\pi(m)$ corresponds via $\pi_*$ to a subspace of  the tangent space of $\s^{-1}(\varepsilon)$ at $m$. Now
$$\iota_{\pi_*(\hat \beta_m)}{\psi^{\varepsilon}_{red}}=\iota_{(\hat \beta_m)}{\pi^*{\psi^{\varepsilon}_{red}}}=\iota_{(\hat \beta_m)}{\psi_{h}}$$
since the contraction of $\psi^{\varepsilon}_{red}$ along ${\pi_*(\hat \beta_m)}$ is given by the pull back $$\pi^*(\psi^{\varepsilon}_{red})=i^*{\psi_h}.$$ The fact that     
$\psi_h$ is appropriately chosen horizontal on $\s^{-1}(\varepsilon)$ implies that $\iota_{(\hat \beta_m)}{\psi_{h}}=0$. 
\noindent We can give the following general definition.
\begin{definition}
A $4$-plectic manifold $M$, obtained by reduction  from $(N,\psi_h)$ with tri-moment map $\sigma:N\to{\R^k}^*$ for the action of $G=Sp(1)^k$, is said to be obtained by an {\em $Sp(1)^m$-equivariant} reduction if the projection $\pi:\sigma^{-1}{(x)}\to \sigma^{-1}{(x)}/G$ is $Sp(1)^m$-equivariant. \end{definition} 
\noindent With the above notations, with the same argument used for $M=\H\P^2$ it is not difficult to prove 
\begin{theorem}\label{reduction}
Let $M$ be a $4$-plectic manifold obtained via a $Sp(1)^m$-equivariant reduction from $(N,\psi_h)$.
Then
the blow-up of $M$ at a point can be endowed with a $4$-plectic form reduced from the horizontal form $\psi_h$. 
\end{theorem}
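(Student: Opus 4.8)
The plan is to reproduce, in this general setting, the strategy carried out explicitly for $M=\HP^2$ in Subsection \ref{blow}: realize the blow-up as a $4$-plectic cut and verify the single hypothesis needed to invoke Theorem \ref{TEO7.2}. Concretely, fix a regular value $\varepsilon$ of the tri-moment map $h\colon M\to\R$ of the restricted diagonal $Sp(1)$-action, form the product $(M\times\H,\Phi=\psi_{red}\oplus\psi_0)$ with the diagonal $Sp(1)$-action $\lambda(p,q)=(\lambda p,\lambda q)$ and tri-moment map $F(p,q)=h(p)-\tfrac14|q|^4$, and recall that $F^{-1}(\varepsilon)/Sp(1)=\overline{M_{h\geq\varepsilon}}$ is precisely the blow-up of $M$ at a point. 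Everything then reduces to showing that $\Phi$ is horizontal along $F^{-1}(\varepsilon)$, i.e. that $\iota_{\hat\beta}\Phi|_{F^{-1}(\varepsilon)}=0$ for every $\beta\in\mathfrak{sp}(1)$.

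Since $\Phi=\psi_{red}\oplus\psi_0$ has no mixed terms, the fundamental vector field $\hat\beta=(\hat\beta_M,\hat\beta_\H)$ of the diagonal action gives a contraction $\iota_{\hat\beta}\Phi$ that splits as $\iota_{\hat\beta_M}\psi_{red}+\iota_{\hat\beta_\H}\psi_0$ (up to the obvious pullbacks), so it suffices to treat the two summands separately. For the $\H$-factor I would simply repeat the local calculation of Subsection \ref{blow}: writing $\hat\beta_q$ in coordinates (as done there for $\beta=H,X,Y$ using Proposition \ref{moment}) and using that on $F^{-1}(\varepsilon)$ the quaternionic coordinate satisfies $|q|^2=2\sqrt{h(p)-\varepsilon}$, hence lies on a sphere where $x_1dx_1=-\sum_{i\geq2}x_idx_i$, one obtains $\iota_{\hat\beta_q}\psi_0=0$. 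This part is purely local in the $\H$-factor and is therefore identical to the $\HP^2$ computation.

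The heart of the matter is the $M$-factor, and here the hypothesis that $M$ is obtained by an \emph{$Sp(1)^m$-equivariant} reduction from $(N,\psi_h)$ is exactly what is needed. Writing $p=\pi(n)$ with $n\in\sigma^{-1}(x)\subseteq N$, the equivariance of $\pi\colon\sigma^{-1}(x)\to M$ gives $\hat\beta_p=\pi_*(\hat\beta_n)$ for the cutting field $\beta\in\mathfrak{sp}(1)$. Pulling back along the submersion $\pi$ and using the defining relation $\pi^*\psi_{red}=i^*\psi_h$ of the reduced form, I would compute
$$\pi^*\bigl(\iota_{\hat\beta_p}\psi_{red}\bigr)=\iota_{\hat\beta_n}\pi^*\psi_{red}=\iota_{\hat\beta_n}\,i^*\psi_h=i^*\bigl(\iota_{\hat\beta_n}\psi_h\bigr).$$
The right-hand side vanishes because $\psi_h$ is horizontal along $\sigma^{-1}(x)$, and since $\pi$ is a surjective submersion $\pi^*$ is injective on forms, whence $\iota_{\hat\beta_p}\psi_{red}=0$ along $M_{h\geq\varepsilon}$. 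Combining the two factors yields $\iota_{\hat\beta}\Phi|_{F^{-1}(\varepsilon)}=0$.

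With horizontality established, Theorem \ref{TEO7.2} applies verbatim and produces a $4$-plectic structure $\Psi_\varepsilon$ on the cut $\overline{M_{h\geq\varepsilon}}$, which is the desired form on the blow-up, manifestly reduced from $\psi_h$. The step I expect to be delicate is precisely the $M$-factor argument: one must ensure that the fundamental field $\hat\beta_n$ of the \emph{cutting} $Sp(1)$ on $\sigma^{-1}(x)$ is annihilated by $\iota_{(\cdot)}\psi_h$, i.e. that the horizontality of $\psi_h$ (together with the equivariant structure carrying $\hat\beta_p$ down to $N$) is strong enough to kill this field. This is where the ``appropriately chosen horizontal'' form of Subsection \ref{blow} does the work, and it is the only point where the equivariance hypothesis is genuinely used.
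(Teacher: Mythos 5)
Your proposal is correct and follows essentially the same route as the paper: it realizes the blow-up as the $4$-plectic cut $F^{-1}(\varepsilon)/Sp(1)$, verifies horizontality of $\psi_{red}\oplus\psi_0$ by splitting into the $\H$-factor (the sphere computation) and the $M$-factor (where $Sp(1)^m$-equivariance gives $\hat\beta_p=\pi_*(\hat\beta_n)$ and $\pi^*\psi_{red}=i^*\psi_h$ transfers horizontality of $\psi_h$ down to $M$), and then invokes Theorem \ref{TEO7.2}. Your explicit appeal to injectivity of $\pi^*$ for the surjective submersion $\pi$ is a small precision the paper leaves implicit, but the argument is the same.
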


\noindent 
In \cite{F} it is proven that all quaternionic flag manifolds can be obtained by equivariant reduction. Therefore the previous theorem can be applied to this class of examples, showing that it is possible to equip their blow-up with a $4$-plectic structure.

\subsection{Polytopes vs quaternionic toric manifolds}
Let $P\subseteq {\mathbb{R}^m}^*$ be obtained by cutting the standard simplex $\Delta_m$ with $d-m-2$ hyperplanes parallel to the original facets (in order to have $d-1$ facets),
 and let $M_P$ be the corresponding manifold. We prove here that if we cut another time $P$ with a hyperplane parallel to one of its facets then the manifold that we get is the blow-up at a point of $M_P$.\\
With the notation of Section \ref{poli}, the kernel $N_1=\ker \pi$ of the map $\pi:(S^1)^{d-1}\to ( S^1)^m$ has real dimension $d-m-1$. Let $\widehat{N_1}\cong Sp(1)^{d-m-1}$ be its extension  and let 
  $$\s_{\widehat{N_1}}:\mathbb{H}^{d-1}\rightarrow {\rr^{d-m-1}}^*$$
   be the tri-moment map associated with the action of $\widehat{N}_1$ on $\HH^{d-1}$. 
By construction, the manifold $M_P$ is given by
 \begin{equation}
 M_P=\s_{\widehat{N_1}}^{-1}(a_1,a_2\ldots,a_{d-m-1})/\widehat{N}_1
 \end{equation}
 where $(a_1,a_2\ldots, a_{d-m-1})\in\mathbb{R}^{d-m-1}$ is determined by the polytope $P$.
 Cutting $P$ with a hyperplane parallel to one of its facets to obtain a new polytope $\widetilde P$ with $d$ facets, we get that the kernel of the new projection $\tilde \pi$ is isomorphic to $N=N_1\times N_2$ where $N_2\cong  S^1$.
 Since the action of $N_1$ is trivial on the $d$-th coordinate of $\HH^d$,  
 it is possible to define the action of the extension $\widehat{N}=\widehat{N}_1\times \widehat{N}_2$ where $\widehat{N}_2\cong Sp(1)$. 
If we enumerate the facets of $P$ as $j=1,\ldots,d-1$ and the cut is parallel to the $j$-th facet, then the  tri-moment map $\s_{\widehat{N}}:\mathbb{H}^{d-1}\times \mathbb{H}\rightarrow {\mathbb{R}^{d-m}}^*$ corresponding to the action of $\widehat N$ on $\HH^d$ is given by \[\s_{\widehat{N}}(q_1,\ldots,q_{d-1},q_d)=(\s_{\widehat{N_1}}(q_1,\ldots,q_{d-1}),\langle \s
(q_1,\ldots,q_{d-1}), e_j\rangle -\frac{1}{4}|q_d|^4 )\]
where $\s$ is the tri-moment map associated with the standard action of $Sp(1)^{d-1}$ on $\HH^{d-1}$ and $e_j$ is the $j$-th element of the standard basis of $\rr^{d-1}$.
Therefore, given $(a_1,a_2\ldots,a_{d-m}) \in \mathbb{R}^{d-m}$, we get
 \begin{equation}
\s_{\widehat{N}}^{-1}(a_1,a_2\ldots,a_{d-m})=
\left\{
\begin{aligned}
\s_{\widehat{N_1}}^{-1}(a_1,a_2\ldots,a_{d-m-1})\times \mathbb{H}\\
\langle\s(q),e_j\rangle-\frac{1}{4}|q_d|^4=a_{d-m}
\end{aligned}
\right .
\end{equation}
where $q=(q_1,q_2,\ldots,q_{d-1})$. Following the procedure, the manifold $M_{\widetilde P}$ is obtained as the quotient of $\s_{\widehat{N}}^{-1}(a_1,a_2\ldots, a_{d-m})$ via the action of $\widehat{N}_1\times \widehat{N}_2$. Now $\widehat{N}_1$ acts only on the first $(d-1)$-coordinates.  So the quotient $$M_{\widetilde{P}}=\s_{\widehat{N}}^{-1}(a_1,a_2\ldots, a_{d-m})/\widehat{N}$$ is given by 
\begin{equation}
\begin{array}{cc}
&M_{\widetilde P}=\{(q,q_d) \in M_P\times \H\;\;|\; \langle\s(q),e_j\rangle -\frac{1}{4}|q_d|^4=a_{d-m}\}/\widehat{N}_2.
\end{array}
\end{equation}
If we denote by $h(q)=\langle\s(q),e_j\rangle$, we get that $M_{\widetilde{P}}=\overline{(M_P)}_{h\geq a_{d-m}}$. 
This fact is relevant, indeed it implies that the manifold $M_{\widetilde P}$ is obtained via $4$-plectic cut from  $M_P$. 
Since  $P$ is obtained cutting appropriately the standard simplex, applying iteratively Theorem \ref{reduction} at each cut,
we can  conclude that $M_{\widetilde P}$ admits a $4$-plectic form. 
\begin{theorem} The manifold corresponding to a polytope with $m+k+1$ facets, obtained via cutting the standard simplex $\Delta_m$  in ${\R^m}^*$ by means of $k$-hyperplanes parallel to facets of $\Delta_m$ is the blow-up at $k$ points of $\mathbb{H}\mathbb{P}^m$. 
Moreover it admits a $4$-plectic form. 
\end{theorem}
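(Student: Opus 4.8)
The plan is to argue by induction on the number $k$ of parallel cuts, combining two facts established above: that a cut of the polytope by a hyperplane parallel to a facet corresponds to a $4$-plectic cut of the associated manifold, and that a $4$-plectic cut is a blow-up at a point which (under a horizontality hypothesis) carries a natural $4$-plectic form. First I would treat the base case $k=0$: here the polytope is the standard simplex $\Delta_m$, with $m+1$ facets, and the Example identifying $M_{\Delta_m}$ shows $M_{\Delta_m}\cong\HP^m$. Since $\HP^m$ is a quaternionic flag manifold, by \cite{F} it is obtained by an $Sp(1)^m$-equivariant reduction from some $(\HH^{m+1},\psi_h)$ with $\psi_h$ horizontal (this is exactly the situation exploited for $\HP^2$ in Section~\ref{blow}); in particular it carries a $4$-plectic form.

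For the inductive step I would assume the statement for a polytope $P$ with $m+k$ facets, so that $M_P$ is the blow-up of $\HP^m$ at $k-1$ points and is obtained by an $Sp(1)^m$-equivariant reduction from a horizontal form. Cutting $P$ by one further hyperplane parallel to a facet of $\Delta_m$ yields a polytope $\widetilde P$ with $m+k+1$ facets, and the computation of the preceding subsection shows $M_{\widetilde P}=\overline{(M_P)}_{h\geq a_{d-m}}$ with $h(q)=\langle\s(q),e_j\rangle$; thus $M_{\widetilde P}$ is the $4$-plectic cut of $M_P$ along a regular level of the restricted $Sp(1)$ tri-moment map. Since the $4$-plectic cut is diffeomorphic to the blow-up at a point (Section~5), and the fixed point created by the new facet is the vertex cut off at this stage, distinct from the previous blow-up centers, $M_{\widetilde P}$ is the blow-up of $\HP^m$ at $k$ points; this establishes the smooth part of the claim. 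For the $4$-plectic form I would then invoke the blow-up theorem (Theorem~\ref{reduction}): since $M_P$ is obtained by an $Sp(1)^m$-equivariant reduction from the horizontal $\psi_h$, its blow-up $M_{\widetilde P}$ admits a $4$-plectic form reduced from $\psi_h$.

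To close the induction one must verify that $M_{\widetilde P}$ is \emph{again} obtained by an $Sp(1)^m$-equivariant reduction from a horizontal form, and this is where I expect the main difficulty to lie. The explicit description of the previous subsection realizes $M_{\widetilde P}$ as $\s_{\widehat N}^{-1}(a_1,\dots,a_{d-m})/\widehat N$ with $\widehat N=\widehat N_1\times\widehat N_2$, and the residual $\widehat G\cong Sp(1)^m$ carries $\widehat N$-orbits to $\widehat N$-orbits (Proposition~\ref{condsuffG}), so the $Sp(1)^m$-equivariance of the projection $\pi$ is automatic. What genuinely requires work is producing, at each stage, a horizontal form on the relevant $\HH^d$ whose reduction yields the $4$-plectic form on $M_{\widetilde P}$---equivalently, checking the horizontality hypothesis of Theorem~\ref{TEO7.2} along $(h-\frac{1}{4}|q|^4)^{-1}(\varepsilon)$. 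This is the iterated analogue of the explicit contraction computation carried out for $\HP^2$ in Section~\ref{blow}, where one shows $\iota_{\hat\beta}\Phi$ vanishes along the level set for every $\beta\in\mathfrak{sp}(1)$. Once this horizontality is in place, iterating the cut $k$ times produces the desired $4$-plectic form on the $k$-fold blow-up of $\HP^m$, completing the proof.
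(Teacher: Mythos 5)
Your proposal follows essentially the same route as the paper: the explicit computation identifying a parallel cut of the polytope with the $4$-plectic cut $\overline{(M_P)}_{h\geq a_{d-m}}$, the identification of that cut with a blow-up at a point, and the iterated application of Theorem \ref{reduction} to endow the result with a $4$-plectic form. If anything, you are more careful than the paper on the one delicate step: the paper simply writes ``applying iteratively Theorem \ref{reduction} at each cut,'' without verifying that each successive blow-up is again obtained by an $Sp(1)^m$-equivariant reduction from a horizontal form, which is exactly the hypothesis you correctly flag as the real burden of the induction.
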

 \noindent 
 We observe here that this class of manifolds, thanks to Theorem 3.12  in \cite{GGS}, has also the property of being {\em quaternionic regular}.
 \section{Quaternionic toric manifolds} 
 In the symplectic setting, the Delzant Theorem establishes a one-to-one correspondence between symplectic toric manifolds and Delzant polytopes (up to symplectomorphisms). In the  $4$-plectic case in Theorem \ref{convexity} we obtained a sub-convexity  result on the image of the tri-moment map.  For the class of $4$-plectic manifolds, associated with polytopes obtained cutting the standard simplex,  by means of hyperplanes parallel to its facets,   the image of  the tri-moment map turns out to be convex  and  it coincides with the starting polytope. This establishes, for this class of $4$-plectic manifolds, a correspondence completely analogous to the one stated by the  Delzant Theorem in the symplectic case.\\
 We here present some significant examples in the $4$-plectic setting; we explicitly describe the generalized Hamiltonian action  and  the corresponding  tri-moment map.
 \subsection*{Quaternionic projective spaces} We recall that if $(q_1,\ldots,q_{n+1})\in \H^{n+1}\setminus \{0\},$ then $[q_1,\ldots,q_{n+1}]$ denotes the (right) vector line $\{(q_1\lambda,\ldots,q_{n+1}\lambda)\in \H^{n+1}:\lambda\in \H\}$ of $\H^{n+1}$. As usual $\H\P^n$ denotes the set of (right) vector lines in $\H^{n+1}$.
Using the reduced form obtained in \cite{F} p.337  on $\H \P^m$ acted on freely by the group $Sp(1)^m$
 as $$(\lambda_1,\lambda_2,\ldots\lambda_{m})[q_1:q_2:\cdots:q_{m+1}]=[\lambda_1q_1:\lambda_2q_2:\cdots\lambda_mq_m:q_{m+1}]$$  the tri-moment map turns out to be   $$\s([q_1:q_2:\cdots:q_{m+1}])=-\left(\frac{|q_1|^4}{\sum_{i=1}^{m+1}|q_i|^4}, \frac{|q_2|^4}{\sum_{i=1}^{m+1}|q_i|^4},\ldots,\frac{|q_m|^4}{\sum_{i=1}^{m+1}|q_i|^4}\right)\in{\R^{m}}^*.$$  The image, which is an $m$-simplex, is given by the convex envelope of the images of the points fixed by the group $Sp(1)^m$ that coincide, in this case, with the common critical points of the components   of the tri-moment map.
 \subsection*{Blow up of $\H\P^2$} 
In general the idea is the following: we start with the quaternionic projectve space $\HP^2$ acted on by the group $Sp(1)^2$ 
$$(\lambda_1,\lambda_2)[q_1:q_2:q_3]=[\lambda_1q_1:\lambda_2 q_2:q_3].$$
At each blow-up corresponds an extended action.
We blow-up  first at the point ${[0:1:0]},$ then at ${[1:0:0]}$ and finally at ${[0:0:1]}$. Thus the action at the third step (after three blow-ups)
is the following
\begin{eqnarray}\label{act}(\lambda_1,\lambda_2)([q_1:q_2:q_3],[p_1:p_2],[r_1:r_2],[s_1:s_2])=\\=([\lambda_1q_1:\lambda_2 q_2:q_3],[\lambda_1p_1:p_2],[\lambda_2r_1:r_2],[\lambda_1s_1:\lambda_2s_2]).\end{eqnarray}
If ${(\psi_1)}_{red},{(\psi_2)}_{red}$ denote the reduced $4$-plectic structures on $\HP^1$  and $\HP^2$ respectively (see \cite{F} for the precise expression), we can equip the exceptional divisors $E_i\cong\HP^1$ with the $4$-plectic structures $\alpha_i {(\psi_1)}_{red}$ with $i=1,2,3$. Thus the tri-moment map is  given by $({\sigma}_1,\sigma_2)$ where
\begin{eqnarray}{4\sigma}_1=-\frac{|q_1|^4}{|q_1|^4+|q_2|^4+|q_3|^4}-\alpha_1\frac{|p_1|^4}{|p_1|^4+|p_2|^4}-\alpha_3\frac{|s_1|^4}{|s_1|^4+|s_2|^4}\\
{4\sigma}_2=-\frac{|q_2|^4}{|q_1|^4+|q_2|^4+|q_3|^4}-
\alpha_2\frac{|r_1|^4}{|r_1|^4+|r_2|^4}-\alpha_3{\frac{|s_2|^4}{|s_1|^4+|s_2|^4}}.\end{eqnarray}
Note that if $\alpha_i=0$  for $i=1,2,3$ we find again the tri-moment map of the quaternionic projective space, and for $\alpha_2=\alpha_3=0$ we find the tri-moment map of the blow-up at ${[0:1:0]}$ (the first step), and analogously for $\alpha_1=\alpha_3=0$ the blow-up at   ${[1:0:0]}$  and  for $\alpha_1=\alpha_2=0$ at  ${[0:0:1].}$  Moreover all the $\alpha_i$ must be less or equal to $1$. We finally observe that, in order to obtain an image which is a polytope to which one can apply our procedure necessarily $\alpha_1$ must equal  to $\alpha_2$ otherwise the slope of the edge is not a multiple of $\frac{\pi}{4}$. 
Looking at the action in equation (\ref{act}), we can observe that the fixed points are more than $6$.  However the image is given by the convex envelope of all the fixed points, and is an hexagon (admissible for our procedure only if $\alpha_1=\alpha_2$).

\begin{remark}\label{conv} {\em One can observe that in all  the cases considered so far i.e. when  $M$ equals $\H^m$, $\H\P^{m-1}$  and their iterated  blow-ups, the tri-moment map ${\s}$ for the $Sp(1)^m$ action is given by the composition of the usual moment map $\nu$ for the ${(S^1)}^m$ action on the complex manifolds $\C^m$, $\C\P^{m-1}$ and their iterated blow-ups, with the surjective map $\alpha$ defined on $\H^m$  with values in $\C^m$ as 
$$\alpha(x_1+y_1I_1,\ldots,x_m+y_mI_m)=((x_1+y_1i)^2,\ldots,(x_m+y_m i)^2)$$
and on $\H\P^{m-1}$ with values in $\C\P^{m-1}$ as
$$\alpha([x_1+y_1I_1:\ldots:x_m+y_mI_m])=[(x_1+y_1i)^2:\ldots:(x_m+y_m i)^2]$$
where $x_\ell,y_\ell\in \R,$ $y_\ell\geq 0$ and $I_\ell$ is a purely imaginary unit in $\H$ for any $\ell=1,\ldots,m$.
%
Therefore in all  these cases the image $\s(M)=\nu\circ\alpha(M)$ is  the same of  its complex analog and thus it is a Delzant polytope.}
\end{remark}

\begin{remark}{\em {\bf Action of ${(\H^*)}^m.$}
It is easy to show that  the examples considered so far  admit
a $({\H^*})^m$ action with an open dense orbit.  We point out this fact since it is in complete analogy with what happens for the action of ${(\C^*)}^m$ on the corresponding complex manifolds.\\
 In particular, 
 \begin{enumerate}
 \item the action of ${(\H^*)}^m$ on $\H\P^{m}$ is given by $$(a_1,\ldots,a_m)[q_1:\ldots:q_m:q_{m+1}]:= [a_1 q_1:\ldots:a_mq_m:q_{m+1}]$$ and it has an open dense orbit since the generic stabilizer is trivial.
 \item The  group  ${(\H^*)}^2$  acting on the base space $\H^2$
  as $$(a_1,a_2)(q_1,q_2):= (a_1 q_1,a_2 q_2)$$
 has an open dense orbit  and lifts to the blow up $Bl_p(\H\P^2)$, naturally: the group acts taking a direction in the exceptional fiber $\H\P^1$ to another direction in $\H\P^1$.
  Indeed the action of ${(\H^*)}^2$ on $Bl_p(\H\P^2)$ is given by $$(a_1,a_2)([q_1:q_2:q_3],[p_1:p_2]):=([a_1q_1:a_2 q_2:q_3],[a_1p_1:p_2])$$ and the generic orbit is open and dense.
  \item The same argument also applies for $({\mathbb{H}^*})^m$ on $\H\P^m\#k\overline{ \H\P}^m$.
  \end{enumerate}}
  \end{remark}
  \subsection*{The manifold ${G_2}/ {SO(4)}$}
 Going through the list of Quaternionic K\"ahler manifolds, endowed with the Kraines form, the only one admitting a generalized Hamiltonian action of $Sp(1)^n$ with discrete principal isotropy is the $8$-dimensional quotient  $M=\frac{G_2}{SO(4)}$ with $n=2$. Thus it is a toric quaternionic manifold.\\
 We can actually compute the number of fixed points for the action of $Sp(1)^2$ on $\frac{G_2}{SO(4)}$ proving that the fixed point set is given by a single point.
Indeed  the Euler characteristic of $M$ is $3$, this is given by the quotient of the order of the Weyl group of $G_2$, $|W(G_2)|=12$  over the order of the Weyl group of $SO(4)$, $|W(SO(4))|=4$ since the action is polar the Euler characteristic is equal to the number of  points fixed by a maximal abelian subgroup $T$ in $Sp(1)^2$.
 Let $H$ be the normalizer of $K=SO(4)$ in $G_2.$  The order of the fixed point set of $K$ on $M$ equals the order of $\frac{H}{K}$. The quotient $\frac{H}{K}$ has order $1$ or  $3$. If the order is $3$
then $G_2/H$ would have fundamental group  $\Z_3$  (we here use the homotopy sequence and the connectedness of $\SO(4)$) and would be therefore orientable (since it does not admits subgroups of index two), so that its Euler characteristic should be strictly greater than $1$  and therefore equal to $3$. Hence $\frac{H}{K}$ would have  order $1$ which is a contradiction. So the image, via the moment map, is in this case contained in the convex envelope of a  set of points whose cardinality runs from $1$ to $\# M^T=\chi(M)=3$.\\\\
 A further investigation could clarify whether
$(\H^*)^2$ acts on this manifold with an open dense orbit and
 whether the manifold is quaternionic regular (in the sense of \cite{GGS}). Moreover it would be interesting to understand if
 the image of $\frac{G_2}{SO(4)}$ via the tri-moment map is related with the moment map image of its twistor space.\\\\
\noindent {\bf Acknowledgments}  The authors would like to thank  Victor Guillemin  for suggesting the  study of a quaternionic counterpart of toric manifolds and Fiammetta Battaglia, Fabio Podest\`a and Elisa Prato for interesting and useful conversations.


\begin{thebibliography}{15}
\addcontentsline{toc}{chapter}{Bibliografia}
\bibitem {At}{\sc M. Atiyah} {\em Convexity and commuting Hamiltonians}
Bull. London Math. Soc. {\bf 14} (1982), 1--15. 
\bibitem{Br}{\sc G. Bredon } {\em Introduction to compact transformation groups} Academic Press New York - London, 1972.
\bibitem{Sil} {\sc A. Cannas da Silva}
{\em Lectures on symplectic geometry} 
Lecture Notes in Mathematics, 1764. Springer-Verlag, Berlin, 2001.
\bibitem{cox}{\sc D. A. Cox, J. B. Little, H. Schenck}
{\em Toric Varieties} 
Graduate Studies in Mathematics, 124, American Mathematical Society, Providence, RI, 2011.
\bibitem{Del} {\sc T. Delzant}
{\em Hamiltoniens pŽriodiques et images convexes de l'application moment}  
Bull. Soc. Math. France {\bf 116} (1988),  315--339. 
\bibitem{F}{\sc P. Foth} {\em
Tetraplectic structures, tri-moment maps, and quaternionic flag manifolds} 
J. Geom. Phys. {\bf 41} (2002),  330--343. 
\bibitem {Ge} {\sc I. M. Gelfand, M.M. Kapranov, A.V. Zelevinsky} {\em Discriminants, Resultants, and Multidimensional Determinants} Modern Birkhäuser Classics, 1994.
\bibitem{GGS}{\sc G. Gentili, A. Gori, G. Sarfatti} {\em A direct approach to Quaternionic Manifolds} Math. Nachr., published online   (2016).
\bibitem{Kir}{\sc F. Kirwan} {\em Some examples of minimal degenerate Morse functions}  
Proc. Edinburgh Math. Soc. {\bf 30}  (1987), 289--293. 
\bibitem{K} {\sc V. Y.  Kraines} {\em Topology of quaternionic manifolds} Bull. Amer. Math. Soc. {\bf 71} (1965), 526--527.
\bibitem{Ku}{\sc R.S. Kulkarni} \emph{On the principle of uniformization} J. Diff. Geom., {\bf 13} (1978), 109--138.
\bibitem{Ler}{\sc E. Lerman} {\em Symplectic cuts} Math. Res. Lett. {\bf 2} (1995), 247--258. 
\bibitem{Mar}{\sc J. Marsden, A. Weinstein} {\em Reduction of symplectic manifolds with symmetry}
Rep. Mathematical Phys. {\bf 5} (1974), 121--130.
\bibitem{S} {\sc R. Scott} {\em
Quaternionic toric varieties}
Duke Math. J.{\bf 78} (1995),  373--397. 
\bibitem{So}{\sc J. M. Souriau} {\em G\'eom\'etrie symplectique et physique math\'ematique} Gazette des math\'ematiciens {\bf 10} (1978), 90--133.

\end{thebibliography}
\end{document}